\theoremstyle{plain}
\newtheorem*{thm*}{Theorem}
\newtheorem*{propo*}{Proposition}
\newtheorem*{coro*}{Corollary}
\newtheorem{theorem}{Theorem}[section]
\newtheorem{prop}[theorem]{Proposition}
\newtheorem{cor}[theorem]{Corollary}
\theoremstyle{definition}
\newtheorem{definition}[theorem]{Definition}
\theoremstyle{remark}
\newtheorem{remark}[theorem]{Remark}
\newtheorem*{ack}{Acknowledgments}
\numberwithin{equation}{section}
\newcommand{\sO}{\ensuremath{\mathscr O}}
\newcommand{\C}{\ensuremath{\mathbb C}}
\newcommand{\R}{\ensuremath{\mathbb R}}
\newcommand{\Z}{\ensuremath{\mathbb Z}}
\DeclareMathOperator{\id}{id}
\DeclareMathOperator{\cl}{\sf cl}
\DeclareMathOperator{\tc}{{\sf TC}}
\DeclareMathOperator{\secat}{\sf secat}
\DeclareMathOperator{\cat}{\sf cat}
\DeclareMathOperator{\hdim}{hdim}
\DeclareMathOperator{\Conf}{Conf}
\newcommand{\cc}{{\mathsf c}}
\DeclareMathOperator{\b1}{{\mathbf 1}}
\newcommand{\Jo}{\mathop{\scalebox{1.5}{\raisebox{-0.2ex}{$\ast$}}}}
\begin{document}

\title[Parametrized topological complexity]
{Parametrized topological complexity of collision-free motion planning in the plane}
\author[D. Cohen]{Daniel C. Cohen}\thanks{D. Cohen was partially supported by an LSU Faculty Travel Grant}

\address{Department of Mathematics, Louisiana State University, Baton Rouge, LA 70803}
\email{\href{mailto:cohen@math.lsu.edu}{cohen@math.lsu.edu}}
\urladdr{\href{http://www.math.lsu.edu/~cohen/}
{www.math.lsu.edu/\char'176cohen}}

\author[M. Farber]{Michael Farber}\thanks{M. Farber was partially supported by EPSRC grant EP/V009877/1}
\address{School of Mathematical Sciences, Queen Mary University of London, E1 4NS London}
\email{\href{mailto:M.Farber@qmul.ac.uk}{M.Farber@qmul.ac.uk}}

\author[S. Weinberger]{Shmuel Weinberger}\thanks{S. Weinberger was partially supported by National Science Foundation grant DMS 1811071}
\address{Department of Mathematics, The University of Chicago, 5734 S University Ave, Chicago, IL 60637}
\email{\href{mailto:shmuel@math.uchicago.edu}{shmuel@math.uchicago.edu}}

\date{\today}   

\begin{abstract}
Parametrized motion planning algorithms have high degrees of universality and flexibility, as they are  designed to work under a variety of external conditions, which are viewed as parameters and form part of the input of the underlying motion planning problem. In this paper, we analyze the parametrized motion planning problem for the motion of many distinct points in the plane, moving without collision and avoiding multiple distinct obstacles with a priori unknown positions. This complements our 
prior work \cite{CFW}, where parametrized motion planning algorithms were introduced, and the obstacle-avoiding collision-free motion planning problem in three-dimensional space was fully investigated. The planar case 
requires different algebraic and topological tools than its spatial analog.
\end{abstract}

\keywords{parametrized topological complexity, obstacle-avoiding collision-free motion}

\subjclass[2010]{
55S40, 
55M30, 
55R80, 
70Q05
}

\maketitle

\section{Introduction} \label{sec:intro}
The goal of this paper is to give a topological
measurement of the complexity that robots must confront when navigating in 
a two-dimensional environment, avoiding impediments.

This work 
is a refinement of the work of Farber \cite{Fa03,Fa05} who
studied how much forking is necessary in the programming of a robotic
motion planner operating in a configuration space $X$ 
via a numerical invariant $\tc(X)$.  This,
in turn, was modeled on the seminal paper of Smale \cite{Sm}, which studied the
complexity of ``the fundamental theorem of algebra,'' 
that is, the amount of
forking that arises in the course of computation of solutions to polynomial
equations.  The invariant $\tc(X)$ also
measures the amount of instability that any motion planner must have,
that is, the number of different overlapping sets in a hybrid motion
planning system, or similarly how much forking arises in routing
algorithms.

Interesting as this invariant is, it only captures part of
the difficulty that a robot needs to negotiate.  A more realistic
theory would take into account the sensing capacity of the robot,
multiple robots that maneuver autonomously, energy, timing, and
communication.  We hope to investigate such issues in future work.  In
this paper and the previous one in this series \cite{CFW}, we focus on 
the problem of the computational complexity of
flexibly solving motion planning in a potentially changing
environment.

\renewcommand{\thefootnote}{\fnsymbol{footnote}}
A (point) robot\footnote[2]{
Of course, the idea of a point robot is an idealization.  The
difficulties confronted by a physical robot will only be greater.
Dealing with larger robots is related to the issue of dealing with
families of problems that need not form fibrations, 
and will not be addressed in this paper.}
moving around a convex room has a
simple task.  It can go from any point to any other along the straight
line connecting them.  If there is a single obstacle then any
algorithm must fork - the  one described would require a decision
about whether to go around the obstacle to the left or the right. 
It turns out that two obstacles are harder than one, but
then it gets no harder.  

Similarly, the complexity of motion in a
graph can only have three values - trivial for a tree, complexity $1$ for a
graph with a single cycle, but only 
increasing by one for any graph with more than $1$ cycle.
The reason for this is that any connected graph can be
described as a union of two trees, so if there is a specific graph that
needs to be navigated, one can make use of such a decomposition.  (A
similar statement can be made regarding the part of a room that is
complementary to any union of a finite number of convex subsets.)

Here we shall see that if the robot each day needs to move
around the room where the obstacles have also been moved around, the
complexity of the problem to be solved indeed grows.  
More generally, our main result provides a solution to 
the analogous problem for an arbitrary finite number of robots that
are centrally controlled.  The predecessor paper \cite{CFW} studies the
three-dimensional version of this problem, 
for example, 
for submarines navigating a mined part of the ocean.
Interestingly, the mathematics is somewhat more difficult in this
two-dimensional situation than the three-dimensional case.

In both cases, however, the general formalism is the same.
We consider a parameter space that describes the possible location of
obstacles, and therefore study a parametrized form of topological
complexity.  In our situation we have 
the mathematical structure of a (Hurewicz) fibration 
describing  the
set of motion planning problems, which enables the application of the
powerful apparatus of algebraic topology.  Some of the other problems
mentioned above require a weakening of this hypothesis, and cannot be
directly approached by the methodology of this paper.

\subsection*{Parametrized motion planning }  An autonomously functioning system in robotics typically includes a motion planning algorithm which takes as input the initial and terminal states of the system,  and produces as output  a motion of the system from the initial state to the terminal state. The theory of robot motion planning algorithms is an active area in the field of robotics, see \cite{Lat,Lav} and the references therein. A topological approach to the robot motion planning problem was developed in \cite{Fa03,Fa05}, where topological techniques clarify  
relationships between instabilities occurring in robot 
motion planning algorithms and topological features of the 
configuration spaces of the relevant autonomous systems. 

In a recent article \cite{CFW}, we 
developed a new approach to the theory of motion planning algorithms. In this ``parametrized'' approach, algorithms are required to be \emph{universal}, so that they are able to function under a variety of situations, involving different external conditions which are viewed as parameters and are part of the input of the underlying motion planning problem. Typical situations of this kind arise when one is dealing with the collision-free motion of many objects (robots) moving in two- or
three-dimensional space avoiding a set of obstacles, and the positions of the obstacles are a priori unknown. 

In the current paper, we continue our investigation of the problem of collision-free motion of many particles avoiding multiple moving obstacles, focusing primarily on the planar case. A team of robots moving in an obstacle-filled room is one example. As another illustration, consider a spymaster coordinating  the motion of a team of spies in a planar theatre of operations each day. Spies must avoid opposition checkpoints, which may be repositioned daily, and may not meet so as to avoid potentially compromising one another. The analogous problem in three-dimensional space, for instance, maneuvering a submarine fleet in waters infested with repositionable mines, was analyzed in \cite{CFW}. 

In each of these motion planning problems, one is faced with a space of allowable configurations of the robots/spies/submarines which depends on parameters, the daily positions of the obstacles/checkpoints/mines.   A motion planning algorithm should then be flexible enough to deal with changes in the parameters. 
The algebraic and topological tools used to analyze the complexity of such algorithms in the planar and spatial cases are essentially different. These differences are reflected by a numerical invariant, the \emph{parametrized topological complexity}, which differs in the planar and spatial cases.

\subsection*{Parametrized topological complexity} 
We reformulate these considerations mathematically, using the language of algebraic topology. 

Let $X$ be a path-connected topological space. Viewing $X$ as the space of all states of a mechanical system, the motion planning problem from robotics takes as input an initial state and a terminal state of the system, and requests as output a continuous motion of the system from the initial state to the terminal state. That is, given $(x_0,x_1) \in X\times X$, one would like to produce a continuous path $\gamma\colon I \to X$ with $\gamma(0)=x_0$ and $\gamma(1)=x_1$, where $I=[0,1]$ is the unit interval. 

Let $X^I$ be the space of all continuous paths in $X$, equipped with the compact-open topology. The map $\pi\colon X^I \to X\times X$, $\pi(\gamma)=(\gamma(0),\gamma(1))$, is a fibration, with fiber $\Omega X$, the based loop space of $X$. A solution of the motion planning problem, a motion planning algorithm, is then a section of this fibration, a map $s\colon X\times X \to X^I$ with $\pi\circ s=\id_{X\times X}$. If $X$ is not contractible, the section $s$ cannot be globally continuous, see \cite{Fa05}. 

The topological complexity of $X$ is defined to be the sectional category, or Schwarz genus, of the fibration $\pi\colon X^I \to X\times X$, $\tc(X)=\secat(\pi)$. That is, $\tc(X)$ is the smallest number $k$ for which there is an open cover $X\times X=U_0\cup U_1\cup \dots \cup U_k$ and the map $\pi$ admits a continuous section $s_j\colon U_j \to X^I$ satisfying $\pi\circ s_j = \id_{U_j}$ for each $j$. 
The numerical homotopy type invariant $\tc(X)$ provides a measure of the navigational complexity in $X$. 
Significant recent advances in the subject include work of Dranishnikov \cite{Dr} on the topological complexity of spaces modeling hyperbolic groups, and work of Grant and Mescher \cite{GrM} on the topological complexity of symplectic manifolds. We refer to the surveys \cite{dc18,Fa18} and recent work of Ipanaque Zapata and Gonz\'alez \cite{IG} for discussions of topological complexity and motion planning algorithms in the context of collision-free motion.

A parametrized approach to the motion planning problem was recently put forward in \cite{CFW}. 
In the parametrized setting, constraints are imposed by external conditions encoded by an auxiliary topological space $B$, and the initial and terminal states of the system, as well as the motion between them, must satisfy the same external conditions. 

This is modeled by a fibration $p\colon E \to B$, with nonempty path-connected fibers. For $b\in B$, the fiber $X_b=p^{-1}(b)$ is viewed as the space of achievable configurations of the system given the constraints imposed by $b$. Here, a motion planning algorithm takes as input initial and terminal (achievable given $b$) states of the system, and produces a continuous (achievable given $b$) path between them. That is, the initial and terminal points, as well as the path between them, all lie within the same fiber $X_b$. The parametrized topological complexity of the fibration $p\colon E \to B$ is then defined to be the sectional category of the associated fibration $\Pi\colon E^I_B \to E\times_BE$, where $E\times_BE$ is the space of all pairs of configurations lying in the same fiber of $p$, $E^I_B$ is the space of paths in $E$ lying in the same fiber of $p$, and the map $\Pi$ sends a path to its endpoints.

\subsection*{Obstacle-avoiding, collision-free motion} 
Investigating the collision-free motion of $n$ distinct ordered particles in a topological space $Y$ leads one to study the standard (unparametrized) topological complexity of the classical configuration space
\[
\Conf(Y,n)=\{(y_1,y_2,\dots,y_n) \in Y^n \mid y_i \neq y_j\ \text{for}\ i \neq j\}
\]
of $n$ distinct ordered points in $Y$. Similarly, investigating the collision-free motion of $n$ distinct particles in a manifold $Y$ in the presence of $m$ distinct obstacles, with a priori not known positions, leads one to study the parametrized topological complexity of the classical Fadell-Neuwirth bundle, the locally trivial fibration
\[
p\colon \Conf(Y,m+n) \to \Conf(Y,m),\quad p(y_1,\dots,y_m,y_{m+1},\dots,y_{m+n}) =(y_1,\dots,y_m),
\]
with fiber $p^{-1}(y_1,\dots,y_m)=\Conf(Y\smallsetminus \{y_1,\dots,y_m\},n)$.

In this paper, we  complete the determination of the parametrized topological complexity of the Fadell-Neuwirth bundles of Euclidean configuration spaces begun in \cite{CFW}. Our main result, Theorem \ref{thm:main}, includes the following as a special case.

\begin{thm*} For positive integers $m$ and $n$, 
 the parametrized topological complexity of the motion of $n$ non-colliding 
 particles
 in the plane $\R^2$, in the presence of $m$ non-colliding point obstacles with a priori unknown positions is equal to $2n+m-2$.
\end{thm*}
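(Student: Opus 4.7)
The plan is to establish the equality $\tc[p] = 2n + m - 2$ by proving matching lower and upper bounds, mirroring the structure used for the three-dimensional case in \cite{CFW} but using tools adapted to the planar setting, where the base $\Conf(\R^2, m)$ of the Fadell-Neuwirth fibration is no longer simply connected and the relevant cohomology is more delicate.

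For the lower bound, I will produce $2n+m-2$ cohomology classes in $H^{*}(E \times_B E; \mathbb{Q})$, where $E = \Conf(\R^2, m+n)$ and $B = \Conf(\R^2, m)$, each vanishing under the diagonal embedding $\Delta \colon E \hookrightarrow E \times_B E$, and whose cup product is nonzero. The space $E \times_B E$ is the complement of a complex hyperplane arrangement in $\C^{m+2n}$: we remove $\{y_i = y_j\}$ for $i \ne j$, $\{z_k = z_\ell\}$ and $\{z'_k = z'_\ell\}$ for $k \ne \ell$, and $\{y_i = z_k\}$, $\{y_i = z'_k\}$ for all $i, k$; crucially, we do \emph{not} remove $\{z_k = z'_\ell\}$. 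Its cohomology is the associated Orlik--Solomon algebra, generated in degree one by classes $a_{u,v}$ subject to Arnold relations. The natural zero-divisors are $\bar{B}_{i,k} := a_{y_i, z'_k} - a_{y_i, z_k}$ and $\bar{C}_{k,\ell} := a_{z'_k, z'_\ell} - a_{z_k, z_\ell}$, each of which pulls back to zero on the diagonal. The strategy is to exhibit a product of $m + 2(n-1)$ such classes, possibly including squares such as $\bar C_{k,\ell}^2 = -2\, a_{z_k, z_\ell}\, a_{z'_k, z'_\ell}$, that is nonzero in degree $m + 2n - 2$; this is the direct analog of the Farber--Yuzvinsky nonvanishing argument used to prove $\tc(\Conf(\R^2, n)) = 2n-2$.

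For the upper bound, I construct an explicit parameterized motion planning algorithm on $E \times_B E$. Given a parameter $b = (y_1, \dots, y_m) \in B$ and two fibre configurations, I move the $n$ particles one at a time; the $k$-th particle must navigate the plane minus the $m$ obstacles and the $n-1$ other particles, i.e.\ a motion planning problem in $\R^2$ minus $m+n-1$ points. A careful inductive assembly of the classical Farber motion planners for these planar configuration spaces, matched with an open cover of the base reflecting the structure of $\cat(\Conf(\R^2, m)) = m-1$, yields a cover of $E \times_B E$ by $2n + m - 1$ open sets on each of which $\Pi$ admits a continuous section, establishing $\tc[p] \leq 2n + m - 2$.

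The main obstacle is the cohomological lower bound. While the analogous computation in $\R^3$ carried out in \cite{CFW} was simplified by the fact that the relevant Orlik--Solomon algebra is concentrated in even degrees and its Arnold relations are less intrusive, the planar case requires careful bookkeeping of signs and relations that mix particle-obstacle and particle-particle generators. A likely route is to exploit the Leray--Hirsch decomposition of $H^*(E \times_B E)$ with respect to the projection to $B$, combined with a direct expansion argument identifying a surviving monomial in the product.
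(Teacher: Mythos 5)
Your lower bound is set up the same way as the paper's: $E\times_BE$ is indeed the complement of the arrangement you describe, its cohomology is generated in degree one by classes $\omega^{}_{i,j},\omega'_{i,j}$ with Arnold-type relations, and the relevant zero-divisors are exactly the differences $\omega^{}_{i,j}-\omega'_{i,j}$ (your $\bar B$ and $\bar C$), which generate an ideal contained in $\ker\Delta^*$. However, there is an error that matters: since these generators have degree one, they anticommute, so $\bar C_{k,\ell}^2=(\omega'-\omega)^2=-\omega'\omega-\omega\omega'=0$, not $-2\,\omega\,\omega'$. The formula you wrote is the one valid for \emph{even}-degree generators, i.e.\ for odd $d$, and the vanishing of these squares in the planar case is precisely why the answer here is $2n+m-2$ rather than the $2n+m-1$ obtained in \cite{CFW} for $\R^3$: no factor may be repeated. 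Consequently the entire content of the lower bound is to exhibit $2n+m-2$ \emph{distinct} differences whose product is nonzero and to verify nonvanishing against the Arnold relations; the paper does this with the product $\prod_{i=1}^{m}(\omega^{}_{i,m+1}-\omega'_{i,m+1})\prod_{j=m+2}^{m+n}(\omega^{}_{1,j}-\omega'_{1,j})\prod_{j=m+2}^{m+n}(\omega^{}_{j-1,j}-\omega'_{j-1,j})$, a normal form for monomials (Proposition \ref{prop:basis}), and a rewriting lemma (Proposition \ref{prop:rewrite}) to isolate a surviving basis element. Your proposal defers exactly this computation, so the lower bound is not yet established.

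The more serious gap is in the upper bound, where the arithmetic of your construction does not reach the claimed value. Assembling optimal fiberwise planners (of complexity $\tc(\Conf(\C\smallsetminus\sO_m,n))=2n$, i.e.\ $2n+1$ local sections) over a categorical cover of $B=\Conf(\R^2,m)$ (of $\cat(B)+1=m$ sets) yields, by the standard combination arguments, at best $2n+m$ open sets, i.e.\ $\tc\le 2n+m-1$; this is also what the general bound $\tc_B(X)\le 2\dim(X)+\dim(B)=2n+(m-1)$ gives directly. You need to save one more, and no mechanism for that appears in your sketch. The paper's device is the homeomorphism $\Conf(\C,\ell)\cong\Conf(\C\smallsetminus\{0,1\},\ell-2)\times\Conf(\C,2)$ obtained by the affine normalization $y\mapsto(y-y_1)/(y_2-y_1)$: it exhibits the Fadell--Neuwirth bundle as the product of a trivial factor with a forgetful fibration over $\Conf(\C\smallsetminus\{0,1\},m-2)$, whose homotopy dimension is $m-2$ rather than $m-1$. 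Applying the product inequality and then $\tc_B(X)\le 2\dim(X)+\dim(B)$ to this reduced base gives $2n+m-2$. Without this (or an equivalent) observation, your explicit-algorithm approach proves only the weaker bound $2n+m-1$.
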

The case $m=1$ of this result reduces to the previously known determination of the (standard) topological complexity of $\Conf(\R^2\smallsetminus\{0\}),n)$, see Remark \ref{rem:m=1}.

As discussed in Section \ref{sec:config pTC}, different techniques yield the same parametrized topological complexity for obstacle-avoiding collision-free motion in $\R^d$ for any $d\ge 4$ even.  
The analogous motion planning problem in $\R^d$, for $d \ge 3$ odd, 
was analyzed in \cite[Thm.~9.1]{CFW}, where it was shown that the parametrized topological complexity is $2n+m-1$. 
These results provide examples of fibrations for which the parametrized topological complexity exceeds the (standard) topological complexity of the fiber, since $\tc(\Conf(\R^d\smallsetminus \{y_1,\dots,y_m\},n))=2n$ as shown in \cite{FGY}.

Our main result also illustrates that parametrized topological complexity may differ significantly from other notions of the topological complexity of a map which appear in the literature. If $p\colon E \to B$ is a fibration which admits a (homotopy) section, as is the case for many Fadell-Neuwirth bundles, then the topological complexity of $p$, as defined in either \cite{MW} or \cite{P19}, is equal to $\tc(B)$. 
For the Fadell-Neuwirth bundle $p\colon \Conf(\R^d,m+n) \to \Conf(\R^d,m)$ with $d\ge 2$ even, we have $\tc(B)=\tc(\Conf(\R^d,m))=2m-2$ (see, for instance, \cite{Fa18}), which differs from 
the parametrized topological complexity of the bundle 
unless the number of obstacles is twice the number of robots.

\section{Parametrized topological complexity} \label{secpTC}

In this brief section, we 
record 
requisite material from \cite{CFW}. Recall the broad framework: We wish to analyze the complexity of a motion planning algorithm in an environment which may change under the influence of external conditions. These conditions, parameters treated as part of the input of the algorithm, are encoded by a topological space $B$. Associated to each choice of conditions, that is, to each point $b \in B$, one has a configuration space $X_b$ of achievable configurations in which motion planning must take place. The motion planning algorithim must thus be sufficiently flexible so as to adapt to different external conditions, that is, different points in the parameter space $B$.

Let $p\colon E \to B$ be a Hurewicz fibration (briefly, a fibration), a continuous map which has the homotopy lifting property with respect to every space \cite[Sec. 2.2]{Spa}. We assume throughout that $p$ has nonempty, path-connected fiber $X$. Let $E^I_B$ denote the space of all continuous paths $\gamma\colon I \to E$ which lie in a single fiber of $p$, so that $p\circ\gamma$ is a 
constant path in $B$. Let 
\[
E\times_BE=\{(e,e') \in E \times E \mid p(e)=p(e')\}
\]
be the space of pairs of points in $E$ which lie in the same fiber. 

\begin{prop} \label{prop:Pi} If $p\colon E \to B$ is a  Hurewicz fibration, then 
the map 
\[
\Pi \colon E^I_B \to E\times_BE, \qquad \gamma \mapsto (\gamma(0),\gamma(1))
\]
given by sending a path to its endpoints is a Hurewicz fibration.
\end{prop} 
The  fiber of  $\Pi \colon E^I_B \to E\times_BE$ consists of 
all paths $\gamma \subset X_b=p^{-1}(b)$, starting and ending at $b=p(e)=p(e')$. In other words, the fiber  $\Pi^{-1}(e,e')$ is the space $\Omega X$ of based loops in $X_b=X$. The fact that $\Pi$ is a fibration is a consequence of a more general result which, for the sake of completeness, is stated and proved in the Appendix.

\begin{definition} \label{def:pTC}
The parametrized topological complexity $\tc[p\colon E \to B]$ of the fibration $p\colon E \to B$ is the sectional category of the fibration $\Pi \colon E^I_B \to E\times_BE$,
\[
\tc[p\colon E \to B] :=\secat(\Pi\colon E^I_B \to E\times_BE).
\]
That is, $\tc[p\colon E \to B]$ is equal to the smallest nonnegative integer $k$ for which the space $E\times_BE$ admits an open cover 
\[
E\times_BE = U_0\cup U_1 \cup \dots \cup U_k,
\]
and the map $\Pi \colon E^I_B\to E\times_BE$ admits a continuous section $s_i\colon U_i \to E^I_B$ for each $i$, $0\le i\le k$.

If the fibration $p$ is clear from the context, we sometimes use the abbreviated notation $\tc[p\colon E \to B]=\tc_B(X)$, to emphasize the role of the fiber $X$. 
\end{definition}

As shown in \cite[Prop.~5.1]{CFW}, parametrized topological complexity is an invariant of fiberwise homotopy equivalence.

For a topological space $Y$, let $\dim(Y)$ denote the covering dimension of $Y$, and let $\hdim(Y)$ denote the homotopy dimension of $Y$, the minimal dimension of a space $Z$ homotopy equivalent to $Y$. Since the parametrized topological complexity of $p\colon E \to B$ is defined to be the sectional category of the associated fibration $\Pi\colon E^I_B \to E\times_BE$, we have
\[
\tc[p\colon E \to B] \le \cat(E\times_BE) \le \hdim(E\times_BE),
\]
where $\cat(Y)$ is the Lusternik-Schnirelmann category of $Y$ (cf. \cite{Sch}). We also have the following.

\begin{prop}[{\cite[Prop.~7.1]{CFW}}] \label{prop:upper}
Let $p\colon E \to B$ be a locally trivial fibration of metrizable topological spaces, with path-connected fiber $X$. Then,
\[
\tc_B(X)=\tc[p\colon E \to B]  \le 2\dim(X)+\dim(B).
\]
\end{prop}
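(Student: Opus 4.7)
The plan is to combine two classical upper bounds. First, for the path space fibration $\Pi\colon E^I_B \to E\times_BE$ appearing in the definition of $\tc_B(X)$, we apply the chain of inequalities already recorded in the excerpt,
\[
\tc_B(X) \;=\; \secat(\Pi) \;\le\; \cat(E\times_BE).
\]
Then we invoke the classical Lusternik--Schnirelmann bound $\cat(Y)\le \dim(Y)$, which holds for any path-connected, normal space $Y$ of finite covering dimension (in particular for any such metrizable space). This reduces the problem to estimating $\dim(E\times_BE)$.

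Second, the projection $\rho\colon E\times_BE \to B$, $(e,e')\mapsto p(e)=p(e')$, is itself locally trivial with fiber $X\times X$: whenever $V\subset B$ is an open set over which $p$ trivializes via $E|_V\cong V\times X$, one obtains $\rho^{-1}(V)\cong V\times X\times X$. Cover $B$ by such trivializing opens; since $B$ is metrizable and therefore paracompact, we may pass to a locally finite refinement. The product inequality $\dim(A\times C)\le \dim(A)+\dim(C)$ for metrizable spaces gives
\[
\dim\bigl(V\times X\times X\bigr) \;\le\; \dim(V) + 2\dim(X) \;\le\; \dim(B) + 2\dim(X),
\]
and the locally finite sum theorem for covering dimension in the metrizable category then upgrades this bound from the pieces of the open cover to the total space, yielding $\dim(E\times_BE)\le \dim(B) + 2\dim(X)$.

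Chaining the two estimates together gives
\[
\tc_B(X) \;\le\; \cat(E\times_BE) \;\le\; \dim(E\times_BE) \;\le\; 2\dim(X)+\dim(B),
\]
which is the claimed inequality.

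The main obstacle is the dimension-theoretic bookkeeping. The product inequality $\dim(A\times C)\le \dim A + \dim C$ can fail for non-metrizable spaces, and the passage from local to global covering dimension via the sum theorem is likewise a properly metrizable fact; these two inputs are exactly where the metrizability hypothesis in Proposition~\ref{prop:upper} is used, and they are the reason the statement is phrased for locally trivial fibrations of metrizable spaces rather than arbitrary Hurewicz fibrations. One should also verify that $E\times_BE$ is path-connected in order to apply $\cat\le \dim$; this follows at once from path-connectedness of $B$ together with path-connectedness of the fiber $X\times X$ (using that $X$ is path-connected by hypothesis), or it may be handled component-by-component with no change to the resulting bound.
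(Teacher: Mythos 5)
Your overall decomposition --- bound $\tc_B(X)=\secat(\Pi)$ by the covering dimension of the base $E\times_BE$ of the fibration $\Pi$, and then bound $\dim(E\times_BE)$ by $\dim(B)+2\dim(X)$ using local triviality of $E\times_BE\to B$ with fiber $X\times X$ together with the subset, product, and locally finite sum theorems for metrizable spaces --- is the right one, and the second half of your argument is correct as written. The gap is in the first half: the inequality $\cat(Y)\le\dim(Y)$ does \emph{not} hold for arbitrary path-connected normal (or even compact metric) spaces of finite covering dimension. It requires a local hypothesis such as local contractibility or being an ANR, because one must produce an open cover by sets whose inclusions into $Y$ are null-homotopic, and small open sets in a general metric space need not have this property. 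The Hawaiian earring $H$ is a standard counterexample: $\dim(H)=1$, but every open set containing the wild point contains an entire essential circle, hence is not contractible in $H$, so $\cat(H)\ge 2$. Nothing in the hypotheses of the proposition guarantees that $E\times_BE$ is locally contractible, so the step $\cat(E\times_BE)\le\dim(E\times_BE)$ is unjustified as stated.

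The fix is to bypass $\cat$ entirely and use Schwarz's dimension bound for sectional category: for a fibration with nonempty fibers over a paracompact base $Z$ one has $\secat\le\dim(Z)$. This is proved by passing to the $r$-fold fiberwise join, whose fiber is the $r$-fold join of a nonempty space and hence $(r-2)$-connected, and invoking the theorem of \cite{Sch} that a fibration with $(r-2)$-connected fiber over a paracompact base of covering dimension at most $r-1$ admits a global section; this uses only paracompactness of $E\times_BE$ (automatic from metrizability, as $E\times_BE$ is a subspace of the metrizable space $E\times E$) and no local structure whatsoever. With $\secat(\Pi)\le\dim(E\times_BE)$ in hand, your dimension-theoretic bookkeeping finishes the proof exactly as you describe, and this is the route taken in the source \cite{CFW} that the proposition cites.
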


Parametrized topological complexity admits a cohomological lower bound. 
For a graded ring $A$, let $\cl(A)$ denote the cup length of $A$, the largest integer $q$ for which there are homogeneous elements $a_1,\dots, a_q$ of positive degree in $A$ such that 
$a_1\cdots a_q \neq 0$. 

\begin{prop}[{\cite[Prop.~7.3]{CFW}}] \label{prop:cup}
Let $p\colon E \to B$ be a fibration with path-connected fiber, and let $\Delta\colon E \to E\times_B E$ be the diagonal map, $\Delta(e)=(e,e)$. Then the parametrized topological complexity of $p\colon E \to B$ is greater than or equal to the cup length of the kernel of the map in cohomology induced by $\Delta$, 
\[
\tc[p\colon E \to B] \ge \cl\left(\ker\bigl[\Delta^*\colon H^*(E\times_B E; R) \to H^*(E;\Delta^*R)\bigr]\right),
\]
for any commutative coefficient ring $R$.
\end{prop}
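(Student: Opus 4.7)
The plan is to derive this cohomological estimate from Schwarz's classical lower bound for the sectional category of any fibration, applied here to the parametrized path fibration $\Pi\colon E^I_B \to E\times_B E$ from Definition \ref{def:pTC}. Since the parametrized topological complexity is defined to be $\secat(\Pi)$, it suffices to show that $\secat(\Pi)\ge \cl\bigl(\ker \Pi^*\bigr)$ and then identify $\Pi^*$ with $\Delta^*$ on cohomology.

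The identification $\Pi^* = \Delta^*$ will come from the constant-path map $c\colon E \to E^I_B$, $c(e)(t)=e$, which is a homotopy equivalence: a deformation retraction of $E^I_B$ onto $c(E)$ is given by the path-shortening homotopy $(\gamma,s)\mapsto \bigl(t\mapsto \gamma((1-s)t)\bigr)$, noting that this stays inside a single fiber of $p$ and so remains within $E^I_B$. Because $\Pi\circ c = \Delta$, for any local coefficient system $R$ on $E\times_B E$ we obtain a commutative triangle identifying $\Pi^*\colon H^*(E\times_B E;R)\to H^*(E^I_B;\Pi^*R)$ with $\Delta^*\colon H^*(E\times_B E;R)\to H^*(E;\Delta^*R)$. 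In particular the two kernels coincide.

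To finish, I would run the standard Schwarz argument. Suppose $E\times_B E = U_0\cup\cdots\cup U_k$ is an open cover with continuous sections $s_i\colon U_i\to E^I_B$ of $\Pi$, and let $\alpha_i\in\ker\Delta^*$ be classes with (possibly distinct) coefficients $R_i$. Since $\Pi\circ s_i$ is the inclusion $U_i\hookrightarrow E\times_B E$, the restriction is
\[
\alpha_i\big|_{U_i} \;=\; s_i^{\,*}\bigl(\Pi^*\alpha_i\bigr) \;=\; 0,
\]
so by the long exact sequence of the pair each $\alpha_i$ lifts to a relative class $\tilde\alpha_i \in H^*(E\times_B E, U_i; R_i)$. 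Naturality of the relative cup product (taking tensor products of coefficient systems) then places
\[
\tilde\alpha_0 \smile \cdots \smile \tilde\alpha_k \;\in\; H^*\!\Bigl(E\times_B E,\; \textstyle\bigcup_{i=0}^k U_i;\; R_0\otimes\cdots\otimes R_k\Bigr) \;=\; 0,
\]
so its image $\alpha_0\smile\cdots\smile\alpha_k$ in $H^*(E\times_B E; R_0\otimes\cdots\otimes R_k)$ also vanishes. This forces $\cl(\ker\Delta^*)\le k$ whenever $\secat(\Pi)\le k$, proving the proposition.

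The only delicate point I anticipate is a careful treatment of cup products with varying twisted coefficient systems, so that the lifts $\tilde\alpha_i$ with coefficients in different local systems $R_i$ can be multiplied and the tensor product coefficient system is well-defined on $E\times_B E$; this is routine but worth stating precisely. Everything else is formal once the homotopy equivalence $E\simeq E^I_B$ through constant paths is in hand.
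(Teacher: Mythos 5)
Your proposal is correct and follows essentially the same route as the source: this paper quotes the result from \cite[Prop.~7.3]{CFW} without reproving it, and the argument there is exactly your reduction to Schwarz's classical cup-length bound for $\secat(\Pi)$, combined with the identification $\ker\Pi^*=\ker\Delta^*$ via the constant-path homotopy equivalence $c\colon E\to E^I_B$ satisfying $\Pi\circ c=\Delta$. The fiberwise path-shortening retraction and the relative cup-product step are both as in the standard proof, so nothing further is needed.
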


We conclude this section by recording 
a 
product inequality for parametrized topological complexity, which we will make use of in Section \ref{sec:config pTC} below. 

\begin{prop}[{\cite[Prop.~6.1]{CFW}}] \label{prop:product}
Let $p'\colon E' \to B'$ and $p''\colon E'' \to B''$ be fibrations with path-connected fibers $X'$ and $X''$ respectively. 
Let $B=B'\times B''$, $E=E'\times E''$, $X=X'\times X''$, and $p=p'\times p''$. Then the product fibration $p\colon E \to B$ satisfies
\[
\tc[p\colon E \to B] \le \tc[p'\colon E' \to B']  + \tc[p''\colon E'' \to B''].
\]
Equivalently, in abbreviated notation, 
\[
\tc_{B'\times B''}(X'\times X'') \le \tc_{B'}(X')  + \tc_{B''}(X'').
\]
\end{prop}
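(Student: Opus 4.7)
The plan is to identify the endpoint fibration $\Pi : E^I_B \to E \times_B E$ associated to $p = p' \times p''$ with the product of the two individual endpoint fibrations $\Pi'$ and $\Pi''$, and then invoke the classical product inequality for Schwarz sectional category.

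First I would record two tautological homeomorphisms. Reshuffling coordinates identifies
\[
E \times_B E \;\cong\; (E' \times_{B'} E') \times (E'' \times_{B''} E''),
\]
since a pair $((e_1',e_1''),(e_2',e_2'')) \in E \times E$ lies in $E \times_B E$ iff $p'(e_1')=p'(e_2')$ and $p''(e_1'')=p''(e_2'')$. A path $\gamma$ in $E = E' \times E''$ lies in a single fiber of $p$ iff each of its coordinate paths $\gamma', \gamma''$ lies in a single fiber of $p'$ and $p''$ respectively, so splitting a path into its two coordinate paths gives
\[
E^I_B \;\cong\; (E')^I_{B'} \times (E'')^I_{B''}.
\]
Under these identifications, $\Pi$ becomes literally $\Pi' \times \Pi''$, because taking endpoints commutes with the splitting.

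Second, I would reduce the problem to the general fact $\secat(\Pi' \times \Pi'') \le \secat(\Pi') + \secat(\Pi'')$. Choose open covers $\{U_i'\}_{i=0}^{k'}$ of $E' \times_{B'} E'$ and $\{U_j''\}_{j=0}^{k''}$ of $E'' \times_{B''} E''$, equipped with local sections $s_i'$ of $\Pi'$ and $s_j''$ of $\Pi''$, realizing $k' = \tc[p'\colon E' \to B']$ and $k'' = \tc[p''\colon E'' \to B'']$. For each $\ell$ with $0 \le \ell \le k'+k''$, set
\[
W_\ell \;=\; \bigcup_{i+j=\ell} U_i' \times U_j''.
\]
The sets $W_0, \dots, W_{k'+k''}$ then cover $E \times_B E$, and on each piece $U_i' \times U_j''$ the product $s_i' \times s_j''$ furnishes a candidate local section of $\Pi' \times \Pi''$.

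The main obstacle, as is typical of such product arguments, is amalgamating within each $W_\ell$ the distinct candidate sections $s_i' \times s_j''$ indexed by decompositions $\ell = i+j$ into a single continuous section. I would invoke the standard refinement/disjointification argument of Schwarz \cite{Sch}: after shrinking the covers, one arranges that for each fixed $\ell$ the pieces $U_i' \times U_j''$ with $i+j = \ell$ are pairwise disjoint, so the product sections paste to a global section of $\Pi$ over $W_\ell$. This yields an open cover of $E \times_B E$ of size $k'+k''+1$ admitting sections of $\Pi$, which gives $\tc[p\colon E \to B] \le k' + k''$, as required.
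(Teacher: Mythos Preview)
The paper does not prove this proposition; it merely quotes \cite[Prop.~6.1]{CFW}. Your reduction is the standard one and is presumably what appears there: the identifications $E\times_B E \cong (E'\times_{B'}E')\times(E''\times_{B''}E'')$ and $E^I_B \cong (E')^I_{B'}\times(E'')^I_{B''}$ are immediate, and they turn the statement into the classical product inequality $\secat(\Pi'\times\Pi'')\le\secat(\Pi')+\secat(\Pi'')$ for sectional category.

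There is one imprecision worth flagging. You write that ``after shrinking the covers, one arranges that for each fixed $\ell$ the pieces $U_i'\times U_j''$ with $i+j=\ell$ are pairwise disjoint.'' Taken literally this is impossible: if $(i,j)\neq(i',j')$ with $i+j=i'+j'$, disjointness of $U_i'\times U_j''$ and $U_{i'}'\times U_{j'}''$ forces $U_i'\cap U_{i'}'=\emptyset$ or $U_j''\cap U_{j'}''=\emptyset$, and you cannot make the members of a cover of a connected space pairwise disjoint while still covering. What the standard argument actually does (and what Schwarz's method gives you, under a mild normality or paracompactness hypothesis on the bases) is produce, via partitions of unity $\{\phi_i'\}$, $\{\psi_j''\}$, open sets $V_{i,j}\subset U_i'\times U_j''$ that are \emph{not} of product form, are pairwise disjoint for fixed $i+j$, and whose unions $W_\ell=\bigcup_{i+j=\ell}V_{i,j}$ still cover. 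With that correction (and the implicit normality hypothesis made explicit), your argument is complete and matches the intended proof.
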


\section{Cohomology of the obstacle-avoiding configuration space}
In this section, we study the structure of the cohomology rings of configuration spaces arising in the context of our main theorem. 
Let $E=\Conf(\R^d,m+n)$ and $B=\Conf(\R^d,m)$. Then, the Fadell-Neuwirth bundle of configuration spaces is $p\colon E \to B$, with fiber $X= \Conf(\R^d\smallsetminus\sO_m,n)$, where $\sO_m$ is a set of $m$ distinct points in $\R^d$. In order to utilize 
Proposition \ref{prop:cup} subsequently, we analyze the cohomology ring of the ``obstacle-avoiding configuration space'' $E\times_BE$.

We use homology and cohomology with integer coefficients, and suppress the coefficients, throughout.  
The principal objects of study, $E$, $B$, $X$, and $E\times_BE$, all have torsion free integral homology and cohomology.  This is well known for the classical configuration spaces, see \cite{FH}. 
We first recall several results from \cite{CFW}. While the focus of \cite{CFW} is mainly on odd dimensions $d\ge 3$, it is readily checked that these specific results hold for any dimension $d\ge 2$.

\begin{prop}[{\cite[Prop. 9.2]{CFW}}] \label{prop:HEBE}
Let $p\colon E=\Conf(\R^d,m+n) \to B=\Conf(\R^d,m)$ be the Fadell-Neuwirth bundle of configuration spaces. 
Then, the cohomology ring $H^*(E\times_B E)$ contains degree $d-1$ elements $\omega^{}_{i,j}$ and $\omega'_{i,j}$, $1\le i< j \le m+n$, which satisfy the relations
\begin{equation*} \label{eq:Hrel}
\begin{array}{ll}
\omega'_{i,j}=\omega^{}_{i,j}\ \text{for}\ 1\le i<j \le m, \quad&
\omega^{}_{i,j}\omega^{}_{i,k}-\omega^{}_{i,j}\omega^{}_{j,k}+\omega^{}_{i,k}\omega^{}_{j,k}=0 \ \text{for}\ i<j <k,\\[4pt]
(\omega^{}_{i,j})^2=(\omega'_{i,j})^2=0 \ \text{for}\ i<j,&
\omega'_{i,j}\omega'_{i,k}-\omega'_{i,j}\omega'_{j,k}+\omega'_{i,k}\omega'_{j,k}=0\ \text{for}\ i<j <k. 
\end{array}
\end{equation*}
\end{prop}

Since $\omega'_{i,j}=\omega^{}_{i,j}$ for $1\le i<j\le m$, the last of these relations may be expressed as $\omega^{}_{i,j}\omega'_{i,k}-\omega^{}_{i,j}\omega'_{j,k}+\omega'_{i,k}\omega'_{j,k}=0$ for such $i$ and $j$. We refer to relations of this general form as ``three term relations''.

For a natural numbers $p \le q$, let $[q]=\{1,2,\dots,q\}$ and $[p,q]=\{p,p+1,\dots,q\}$. Let $I=(i_1,\dots,i_\ell)$ and $J=(j_1,\dots,j_\ell)$ be sequences of elements in $[m+n]$. 
If $i_k <j_k$ for each $k$, $1\le k\le \ell$, we write $I<J$ and define cohomology classes
\[
\omega^{}_{I,J} = \omega^{}_{i_1,j_1}\omega^{}_{i_2,j_2} \cdots \omega^{}_{i_\ell,j_\ell} \quad\text{and}\quad
\omega'_{I,J} = \omega'_{i_1,j_1}\omega'_{i_2,j_2} \cdots \omega'_{i_\ell,j_\ell} 
\]
in $H^{(d-1)\ell}(E\times_BE)$. If $\ell=0$, set $\omega^{}_{I,J}=\omega'_{I,J}=1$. 

For a sequence $J$, write $J \subset [q]$, respectively, $J \subset [p,q]$, to communicate that the elements of $J$ are in the set $[q]$, respectively, $[p,q]$. 
Call the sequence $J=(j_1,j_2,\dots,j_\ell)$ increasing if $j_1<j_2<\dots<j_\ell$. 

\begin{prop}[{\cite[Prop.~9.3]{CFW}}] \label{prop:basis}
The cohomology of $E\times_B E$ is torsion free. A basis for $H^*(E\times_BE)$ is given by the set of cohomology classes
\[
\omega^{}_{I_1,J_1}\omega^{}_{I_2,J_2}\omega'_{I_3,J_3}, 
\]
where $J_1 \subset [m]$, $J_2,J_3 \subset [m+1,m+n]$ are increasing sequences, and $I_1$, $I_2$, and $I_3$ are sequences with $I_1<J_1$, $I_2<J_2$, and $I_3<J_3$.
\end{prop}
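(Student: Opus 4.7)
The plan is to derive this basis statement from the additive isomorphism
\[
H^*(B) \otimes H^*(X \times X) \cong H^*(E \times_B E)
\]
established at the end of the proof of Proposition~\ref{prop:HEBE}. The Leray--Hirsch theorem applied to the pulled-back bundle $E\times_B E \to B$ says that $H^*(E\times_B E)$ is free as an $H^*(B)$-module on any chosen set of lifts of an additive basis of the fiber cohomology $H^*(X\times X)$. Thus it suffices to exhibit concrete bases of $H^*(B)$ and of $H^*(X\times X)$ expressed in the generators $\omega^{}_{i,j}$ and $\omega'_{i,j}$, and then check that no collisions arise upon multiplication.

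For the base $B=\Conf(\R^d,m)$, I would invoke the classical Arnold/Fadell--Neuwirth presentation (see \cite{FH}): $H^*(B)$ is generated by the classes $\omega^{}_{i,j}$ with $1\le i<j\le m$, subject to $(\omega^{}_{i,j})^2=0$ and the three-term relations, and a standard additive basis consists of the monomials $\omega^{}_{I_1,J_1}$ where $J_1$ is an increasing sequence in $[m]$ and $I_1<J_1$. Spanning is a rewriting argument: iterated use of the three-term relation $\omega^{}_{i,j}\omega^{}_{i,k}=\omega^{}_{i,j}\omega^{}_{j,k}-\omega^{}_{i,k}\omega^{}_{j,k}$ normalises any monomial so that its sequence of larger indices is strictly increasing, and linear independence is verified by matching with the known Poincar\'e polynomial of $\Conf(\R^d,m)$.

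For the fiber $X = \Conf(\R^d \smallsetminus \sO_m, n)$, the same method yields an additive basis $\omega^{}_{I,J}$ with $J$ an increasing sequence drawn from $\{m+1,\ldots,m+n\}$ and $I<J$: indeed, the Fadell--Neuwirth fibration identifies $H^*(X)$ with the subalgebra of $H^*(E)$ generated by the $\omega^{}_{i,j}$ with $j>m$, modulo relations of the same three-term shape, so the normalisation argument applies verbatim. Applying this description to each factor of $X\times X$ (using primed generators for the second copy) and invoking the K\"unneth formula produces a basis of $H^*(X\times X)$ consisting of products $\omega^{}_{I_2,J_2}\,\omega'_{I_3,J_3}$ with $J_2,J_3$ increasing in $\{m+1,\ldots,m+n\}$ and $I_2<J_2$, $I_3<J_3$. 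Multiplying by lifts $\omega^{}_{I_1,J_1}$ of the base basis and invoking Leray--Hirsch then produces the claimed basis of $H^*(E\times_B E)$.

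The subtlety on which the argument turns --- and what I would flag as the main obstacle --- is ensuring that the identifications $\omega'_{i,j}=\omega^{}_{i,j}$ for $i<j\le m$, together with the three-term relations that mix base and fiber indices, do not introduce hidden linear dependences among the listed monomials. This is handled by confining $J_2$ and $J_3$ to the block $\{m+1,\ldots,m+n\}$: the $\omega$-indices appearing in the base factor are then disjoint from those in the two fiber factors, so the three-term relations internal to $[m]$, internal to $\{m+1,\dots,m+n\}$, and those straddling the two blocks act independently on the three factors and never identify two distinct normal-form monomials. A final dimension count against the Poincar\'e series of $H^*(B)\otimes H^*(X)^{\otimes 2}$ confirms that the proposed set has the correct cardinality in each degree, completing the basis verification.
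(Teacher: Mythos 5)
Your proof is correct and takes the same route as the source: this paper does not reprove the proposition but cites \cite[Prop.~9.3]{CFW}, whose argument is precisely the Leray--Hirsch/K\"unneth decomposition $H^*(B)\otimes H^*(X)\otimes H^*(X)\cong H^*(E\times_BE)$ already set up in the proof of Proposition \ref{prop:HEBE}, combined with the classical Fadell--Husseini bases for $H^*(B)$ and $H^*(X)$. Note that your reading of the statement --- $J_2,J_3$ increasing sequences in $\{m+1,\dots,m+n\}$ and a prime on the third factor, $\omega'_{I_3,J_3}$ --- is the correct one (as the basis monomials used later, e.g.\ in \eqref{eq:theone}, confirm); as printed, the proposition omits the prime and writes $[m+n]$ for the range of $J_2,J_3$, which would produce a redundant, too-large set.
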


\begin{remark} \label{rem:ring}
As noted in \cite{CFW}, one can use Proposition \ref{prop:basis} to show that the cohomology ring $H^*(E\times_B E)$ is generated as a ring by the classes $\omega^{}_{i,j}$ and $\omega'_{i,j}$, $1\le i< j \le m+n$, and that the relations recorded in Proposition \ref{prop:HEBE}
are, in fact, a defining set of relations. Briefly, let $\mathcal R$ denote the graded commutative ring generated by $\omega^{}_{i,j}, \omega'_{i,j}$ for $1\le i< j \le m+n$, and let $\mathcal I$ be the ideal $\mathcal R$ generated by 
\[
\left\{
\begin{matrix}
\omega'_{i,j}-\omega^{}_{i,j}\ \text{for}\ 1\le i<j \le m, \,&
\omega^{}_{i,j}\omega^{}_{i,k}-\omega^{}_{i,j}\omega^{}_{j,k}+\omega^{}_{i,k}\omega^{}_{j,k}=0 \ \text{for}\ i<j <k,\\[4pt]
(\omega^{}_{i,j})^2,\ (\omega'_{i,j})^2 \ \text{for}\ i<j, \hfill&
\omega'_{i,j}\omega'_{i,k}-\omega'_{i,j}\omega'_{j,k}+\omega'_{i,k}\omega'_{j,k}=0\ \text{for}\ i<j <k
\end{matrix}
\right\}.
\]
One can then check that any monomial $\mu$ in $\mathcal R$ may be expressed as $\mu = \alpha + \beta$, where $\alpha \in {\mathcal I}$ and $\beta$ is a linear combination of the (homogeneous) basis elements recorded in Proposition \ref{prop:basis}. This may be used to show that $H^*(E\times_B E) \cong {\mathcal R}/{\mathcal I}$ as asserted. We will not make use of the full ring structure of $H^*(E\times_B E)$ in what follows.

The space $E\times_B E$ may be realized as the complement of an arrangement of subspaces in $(\R^d)^{m+2n}$, and the ring structure of  $H^*(E\times_B E)$ may also be obtained using the theory of hyperplane and subspace arrangements, see, for instance, \cite{dLS,OT}.
\end{remark}

We conclude this section with a technical result which will be used in the proof of the main theorem. 
For a sequence $J=(j_1,\dots,j_\ell)$, let $\widehat{J}=(j_1,\dots,j_{\ell-1})$.
\begin{definition} \label{def:adm}
Let $J=(j_1,\dots,j_\ell)$ be an increasing sequence. A $J$-\emph{admissible} 
sequence $I=(i_1,\dots,i_\ell)$ is defined recursively as follows.
If $|J|=\ell=1$, then $I$ is $J$-admissible if and only if $I=J$. If $|J|=\ell \ge 2$, then $I$ is $J$-admissible if
\begin{enumerate} 
\item $I$ is nondecreasing, $i_1\le \dots \le i_\ell$,
\item $\widehat{I}=(i_1,\dots,i_{\ell-1})$ is $\widehat{J}$-admissible, and 
\item either $i_{\ell}=i_{\ell-1}$ or $i_\ell=j_\ell$.
\end{enumerate} 
\end{definition}
For instance, if $J=(j_1,j_2)$, the $J$-admissible sequences are $(j_1,j_1)$ and $J$ itself.

\begin{prop} \label{prop:rewrite}
If $J=(j_1,\dots,j_\ell)$ is an increasing sequence and $r > j_\ell$, 
then
\begin{align*}
\omega^{}_{j_1,r}\omega^{}_{j_2,r}\cdots \omega^{}_{j_\ell, r} &=
(-1)^\ell \sum_I (-1)^{d_I} \omega^{}_{i_1,j_2} \omega^{}_{i_2,j_3} \cdots \omega^{}_{i_{\ell-1},j_\ell} \omega^{}_{i_\ell,r},\\
\intertext{and}
\omega'_{j_1,r}\omega'_{j_2,r}\cdots \omega'_{j_\ell, r} &=
(-1)^\ell \sum_I (-1)^{d_I} \omega'_{i_1,j_2} \omega'_{i_2,j_3} \cdots \omega'_{i_{\ell-1},j_\ell} \omega'_{i_\ell,r},
\end{align*}
where the sums are over all $J$-admissible sequences $I$, and $d_I$ is the number of distinct elements in $I$.
\end{prop}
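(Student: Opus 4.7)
\medskip

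My plan is to prove the (unprimed) identity by induction on $\ell$, and then observe that the argument for the primed version is formally identical since it uses only the analogous three-term relation among the $\omega'_{i,j}$.

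The base case $\ell=1$ is immediate, as the only $J$-admissible sequence is $I=(j_1)$ with $d_I=1$. For the inductive step, I would not manipulate the left-hand side directly; instead I would work with the right-hand side and show it equals $(-1)^\ell$ times the left-hand side. Write $F(J,r)$ for the sum on the right (without the $(-1)^\ell$ prefactor). Every $J$-admissible $I$ satisfies exactly one of (A) $i_\ell = j_\ell$, or (B) $i_\ell = i_{\ell-1}$, since $i_{\ell-1}\le j_{\ell-1}<j_\ell$ rules out coincidence of the two options. These cases are in bijection with $J'$-admissible sequences $I'$ (where $J'=(j_1,\dots,j_{\ell-1})$), with $d_I = d_{I'}+1$ in case (A) and $d_I=d_{I'}$ in case (B). Thus $F(J,r)=-A+B$, where
\[
A=\sum_{I'}(-1)^{d_{I'}}\omega_{i_1,j_2}\cdots\omega_{i_{\ell-1},j_\ell}\,\omega_{j_\ell,r},\qquad
B=\sum_{I'}(-1)^{d_{I'}}\omega_{i_1,j_2}\cdots\omega_{i_{\ell-1},j_\ell}\,\omega_{i_{\ell-1},r}.
\]

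The key step is to rewrite $B$ using the three-term relation with $i=i_{\ell-1}$, $j=j_\ell$, $k=r$ (valid since $i_{\ell-1}\le j_{\ell-1}<j_\ell<r$), which gives
\[
\omega_{i_{\ell-1},j_\ell}\,\omega_{i_{\ell-1},r}=\omega_{i_{\ell-1},j_\ell}\,\omega_{j_\ell,r}-\omega_{i_{\ell-1},r}\,\omega_{j_\ell,r}.
\]
Substituting, the first summand reproduces $A$, while the second summand factors as $F(J',r)\cdot\omega_{j_\ell,r}$ after pulling $\omega_{j_\ell,r}$ out on the right (no sign, since it is pulled out and not commuted past anything). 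By the inductive hypothesis applied to the shorter sequence $J'$ with the same $r$, $F(J',r)=(-1)^{\ell-1}\omega_{j_1,r}\cdots\omega_{j_{\ell-1},r}$. Hence
\[
B=A-(-1)^{\ell-1}\omega_{j_1,r}\cdots\omega_{j_\ell,r},
\]
and $F(J,r)=-A+B=-(-1)^{\ell-1}\omega_{j_1,r}\cdots\omega_{j_\ell,r}=(-1)^\ell\omega_{j_1,r}\cdots\omega_{j_\ell,r}$, which is the desired identity.

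The main obstacle is bookkeeping: correctly verifying the disjointness and exhaustiveness of the split of admissible sequences into cases (A) and (B), and making sure that the factor $\omega_{j_\ell,r}$ (or, symmetrically, the subfactor $\omega_{j_1,j_\ell}\cdots\omega_{j_{\ell-1},j_\ell}$ that would alternatively identify $A$ via applying the induction to $J'$ with $r$ replaced by $j_\ell$) can be separated off without incurring a Koszul sign. Because these factors are always removed from the right end of a monomial and not commuted past anything, no sign from the graded-commutativity of degree-$(d-1)$ classes is introduced, which is what makes the clean cancellation $-A+A$ work. For the primed identity, the identical argument applies, using the three-term relation among the $\omega'_{i,j}$ supplied by Proposition~\ref{prop:HEBE}.
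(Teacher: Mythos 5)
Your proof is correct and is essentially the paper's argument: the same induction on $\ell$, the same recursive splitting of $J$-admissible sequences according to whether $i_\ell=j_\ell$ or $i_\ell=i_{\ell-1}$, and the same application of the three-term relation with indices $(i_{\ell-1},j_\ell,r)$. The only difference is that you run the computation from the right-hand side toward the left-hand side, whereas the paper expands $\omega^{}_{J,R}=\omega^{}_{J',R'}\,\omega^{}_{j_\ell,r}$ and works forward; the content is identical.
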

Observe that the sums above are linear combinations of distinct elements of the basis 
for $H^*(E\times_BE)$ given in Proposition \ref{prop:basis}.
\begin{proof} 
Let $R = (r,r,\dots,r)$ be the constant sequence of length $\ell$. 
The proposition asserts that  
\[
\omega^{}_{J,R}=(-1)^\ell \sum_I (-1)^{d_I} \omega^{}_{I,K} \quad \text{and} \quad \omega'_{J,R}=(-1)^\ell \sum_I (-1)^{d_I} \omega'_{I,K},
\]
where  $K=(j_2,\dots,j_\ell,r)$.  Clearly, it suffices to consider $\omega^{}_{J,R}$. 

The proof is by induction on $\ell=|J|$, with the case $\ell=1$ trivial. 
The case $\ell=2$ is the three term relation
$\omega^{}_{j_1,r}\omega^{}_{j_2,r}=\omega^{}_{j_1,j_2}\omega^{}_{j_2,r}-\omega^{}_{j_1,j_2}\omega^{}_{j_1,r}$, which will be crucial subsequently.

Assume that $\ell \ge 3$. For $J=(j_1,\dots,j_\ell)$, recall that $\widehat{J}=(j_1,\dots,j_{\ell-1})$, and let $\widehat{R}$ be the constant sequence of length $\ell-1$.
By induction, we have
\[
\omega^{}_{\widehat{J},\widehat{R}} 
=(-1)^{\ell-1}\sum_{\widehat{I}} (-1)^{d_{\widehat{I}}} \omega^{}_{i_1,j_2}  \cdots \omega^{}_{i_{\ell-2},j_{\ell-1}} \omega^{}_{i_{\ell-1},r},
\]
where the sum is over all $\widehat{J}$-admissible sequences $\widehat{I}=(i_1,\dots,i_{\ell-1})$. Since $\omega^{}_{J,R}=\omega^{}_{\widehat{J},\widehat{R}} \omega^{}_{j_\ell,r}$, we obtain
\[
\begin{aligned}
\omega^{}_{J,R} &= (-1)^{\ell-1}\sum_{\widehat{I}} (-1)^{d_{\widehat{I}}} \omega^{}_{i_1,j_2}  \cdots \omega^{}_{i_{\ell-2},j_{\ell-1}} \omega^{}_{i_{\ell-1},r}\omega^{}_{j_\ell,r}\\
 &= (-1)^{\ell-1}\sum_{\widehat{I}} (-1)^{d_{\widehat{I}}} \omega^{}_{i_1,j_2}  \cdots \omega^{}_{i_{\ell-2},j_{\ell-1}} (\omega^{}_{i_{\ell-1},j_\ell}\omega^{}_{j_\ell,r} - \omega^{}_{i_{\ell-1},j_\ell}\omega^{}_{i_{\ell-1},r}),
\end{aligned}
\]
using the three term relations on the second line. For $\widehat{I}$ as above, 
let $P=(i_1,\dots,i_{\ell-1},j_\ell)$ and $Q=(i_1,\dots,i_{\ell-1},i_{\ell-1})$. Note that 
$d_P=d_{\widehat{I}}+1$ and $d_Q=d_{\widehat{I}}$. Further, as is clear from Definition \ref{def:adm}, every $J$-admissible sequence $I$ arises from a $\widehat{J}$-admissible sequence $\widehat{I}$ by adjoining either $j_\ell$ or $i_{\ell-1}$.  Thus, 
\[
\begin{aligned}
\omega^{}_{J,R} &= (-1)^{\ell-1}\sum_{\widehat{I}} (-1)^{d_{\widehat{I}}} \omega^{}_{P,K}+(-1)^{\ell}\sum_{\widehat{I}} (-1)^{d_{\widehat{I}}} \omega^{}_{Q,K}\\
&=(-1)^\ell \sum_{P} (-1)^{d_P} \omega^{}_{P,K}+(-1)^\ell \sum_{Q} (-1)^{d_Q} \omega^{}_{Q,K}\\
&=(-1)^\ell \sum_{I} (-1)^{d_I} \omega^{}_{I,K},
\end{aligned}
\]
where the last sum is over all $J$-admissible sequences as required.
\end{proof}

\section{Obstacle-avoiding collision-free motion in 
even dimensions} \label{sec:config pTC}
We now state and prove our main theorem, 
determining the parametrized topological complexity of obstacle-avoiding collision-free motion in any Euclidean space $\R^d$ of positive even dimension. The case $d=2$ of the plane was highlighted in the Introduction.

\begin{theorem} \label{thm:main}
For positive integers $n$, $m$, and $d\ge 2$ even, the parametrized topological complexity of the motion of $n$ non-colliding particles in $\R^d$ in the presence of $m$ non-colliding point obstacles with a priori unknown positions is equal to $2n+m-2$. In other words, 
the parametrized topological complexity of the Fadell-Neuwirth bundle 
$p\colon \Conf(\R^d,m+n) \to \Conf(\R^d,m)$ is
\begin{equation*} \label{eq:pTCFN}
{\tc}\bigl[p\colon \Conf(\R^d,m+n) \to \Conf(\R^d,m)\bigr]=2n+m-2.
\end{equation*}
\end{theorem}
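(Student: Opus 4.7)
The plan is to establish matching upper and lower bounds of $2n+m-2$ on $\tc[p\colon E\to B]$. For the lower bound I would apply the cup-length estimate of Proposition \ref{prop:cup}, using the ring presentation of $H^*(E\times_B E)$ from Section~3. The natural classes in $\ker\Delta^*$ are the differences $\bar\omega_{i,j}:=\omega'_{i,j}-\omega^{}_{i,j}$ in degree $d-1$; these vanish for $j\le m$, so only those with $j\in\{m+1,\dots,m+n\}$ are usable. Because $d$ is even, each generator $\omega^{}_{i,j}$ and $\omega'_{i,j}$ has odd degree $d-1$, so $(\omega^{}_{i,j})^2=(\omega'_{i,j})^2=0$, and graded anticommutativity then gives $(\bar\omega_{i,j})^2=0$ as well. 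Thus each index pair $(i,j)$ may appear at most once in a nonzero product of the $\bar\omega$'s, and it is precisely this squaring constraint that lowers the answer by one compared to the odd-$d$ analysis of \cite{CFW}.

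The heart of the lower bound is to produce an explicit cup product of $2n+m-2$ difference classes which is provably nonzero. A natural candidate has the form
\[
P=\prod_{i=1}^{m}\bar\omega_{i,m+1}\cdot\prod_{k=2}^{n}\bar\omega_{m+k-1,m+k}\,\bar\omega_{\alpha_k,m+k},
\]
a product of $m+2(n-1)=2n+m-2$ factors, with auxiliary indices $\alpha_k\in\{1,\dots,m+k-2\}$ chosen so that the final reduction succeeds. To verify $P\ne 0$, I would expand $P$ into $2^{2n+m-2}$ signed monomials in the generators $\omega^{}_{i,j}$ and $\omega'_{i,j}$, apply Proposition \ref{prop:rewrite} to rewrite each product of the form $\omega^{}_{j_1,r}\cdots\omega^{}_{j_\ell,r}$ (or its primed variant) as a sum indexed by $J$-admissible sequences, and then reduce everything into the basis of Proposition \ref{prop:basis}. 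Most of the summands should cancel pairwise via the three-term relations combined with anticommutativity; the crux is to exhibit a single basis monomial that receives a uniquely uncancelled contribution.

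For the upper bound, the naive estimate $\tc_B(X)\le 2\dim(X)+\dim(B)=4n+2m$ from Proposition \ref{prop:upper} is far too weak. Passing to homotopy dimensions, Proposition \ref{prop:HEBE} combined with Leray--Hirsch gives $\hdim(E\times_B E)=(d-1)(2n+m-1)$, so for $d=2$ we already have the LS-category bound $\tc_B(X)\le \cat(E\times_B E)\le 2n+m-1$, only one unit too large. To shave off this final unit I would construct an explicit open cover of $E\times_B E$ of cardinality $2n+m-1$ on which $\Pi$ admits continuous sections, by iterating along the Fadell--Neuwirth tower: route the $n$ robots to their targets one at a time through the complement in $\R^2$ of the $m$ obstacles and the robots already routed, and amalgamate the resulting single-robot parameterized motion planning covers via Proposition \ref{prop:product}, exploiting the fact that $\R^2\smallsetminus\sO_k$ is homotopy equivalent to a wedge of $k$ circles. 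This adapts the covering strategy of \cite{CFW} for odd $d$ with one extra unit of savings due to the smaller homotopy dimension of the planar fiber.

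The principal obstacle is the cancellation analysis for the lower bound: with all generators in odd degree, graded anticommutativity creates abundant opportunities for terms to vanish, and the whole argument hinges on choosing $P$ (and in particular the indices $\alpha_k$) so that exactly one basis element from Proposition \ref{prop:basis} survives with a nonzero net coefficient.
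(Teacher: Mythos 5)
Your lower bound follows the paper's route: the same ideal $\mathcal J=\langle\omega^{}_{i,j}-\omega'_{i,j}\rangle$ inside $\ker\Delta^*$, a test product of the same shape (the paper's $\Psi$ is exactly your $P$ with all $\alpha_k=1$, i.e.\ $\prod_{i=1}^{m}(\omega^{}_{i,m+1}-\omega'_{i,m+1})\prod_{j=m+2}^{m+n}(\omega^{}_{1,j}-\omega'_{1,j})(\omega^{}_{j-1,j}-\omega'_{j-1,j})$), and reduction to the basis of Proposition \ref{prop:basis} via Proposition \ref{prop:rewrite}. But the step you defer --- exhibiting one basis monomial with nonzero net coefficient --- is the entire content of Proposition \ref{prop:CL2} and is genuinely delicate; the paper's argument is not a pairwise-cancellation count but a structural one: it isolates the basis element $\omega^{}_{1,2}\cdots\omega^{}_{1,m+n}\,\omega'_{m+1,m+2}\cdots\omega'_{m+n-1,m+n}$ and shows that rewriting any \emph{other} monomial in the expansion can never produce it, because every such rewriting forces a factor $\omega'_{\beta,m+1}$ or $\omega'_{1,j}$, or destroys the chain $\omega'_{m+1,m+2}\cdots\omega'_{m+n-1,m+n}$. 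Without an argument of this kind your lower bound is incomplete.

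The upper bound is where your proposal would fail. For $d=2$ the mechanism that saves the final unit is not an explicit cover built by iterating the Fadell--Neuwirth tower: Proposition \ref{prop:product} applies to \emph{products} of fibrations, not to towers, so there is no way to ``amalgamate single-robot covers'' along the tower with the tools available, and no mechanism in your sketch actually produces $2n+m-2$ rather than $2n+m-1$. The paper instead uses the affine structure of $\C$: the homeomorphism $(y_1,\dots,y_\ell)\mapsto\bigl((\tfrac{y_3-y_1}{y_2-y_1},\dots),(y_1,y_2)\bigr)$ identifies $p$ with $q'\times\id$, where $q'\colon\Conf(\C\smallsetminus\{0,1\},m+n-2)\to\Conf(\C\smallsetminus\{0,1\},m-2)$; since the new base has homotopy dimension $m-2$ (not $m-1$), Proposition \ref{prop:upper} applied to $q'$ gives $2n+(m-2)$ directly. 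You have no substitute for this trivialization of a $\Conf(\C,2)$ factor. Moreover your proposal gives no upper bound at all for even $d\ge4$, where the dimension count is far off and the paper must argue by obstruction theory: the single obstruction to sectioning the $(2n+m-1)$-fold fiberwise join is the power $\theta^{2n+m-1}$ of the primary obstruction $\theta$, which lies in $\mathcal J\otimes H_{d-1}(X)$ and vanishes precisely because $\cl(\mathcal J)=2n+m-2$ --- a fact whose upper-bound half is itself deduced from the already-proved planar case. Both of these ideas are missing from your plan.
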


Let $E= \Conf(\R^d,m+n)$ and  $B=\Conf(\R^d,m)$, so that the Fadell-Neuwirth bundle is $p\colon E \to B$. The fiber of this bundle is $X=\Conf(\R^d\smallsetminus\sO_m,n)$, where $\sO_m$ is a set of $m$ distinct points (representing the obstacles). Each of the spaces $E$, $B$, $X$, and $E\times_BE$ has the homotopy type of a finite CW-complex of known dimension. For the configuration spaces  $B$, $E$, and $X$, see \cite{FH}. For $E\times_BE$, this can be shown using various forms of Morse theory, cf. \cite{Adi,GM}. The dimensions of these CW-complexes are
\begin{equation} \label{eq:hdims}
\begin{array}{ll}
\hdim B = (m-1)(d-1),\quad &
\hdim E = (m+n-1)(d-1),\\
\hdim X = n(d-1),\quad &
\hdim E\times_BE =(2n+m-1)(d-1). 
\end{array}
\end{equation}
Furthermore, each of the spaces  $E$, $B$, $X$, and $E\times_BE$ is $(d-2)$-connected, as each is obtained removing codimension $d$ subspaces from a Euclidean space.

\begin{remark} \label{rem:m=1}
If $m=1$, the base space $B=\Conf(\R^d,m)=\R^d$ of the Fadell-Neuwirth bundle is contractible, and the bundle is trivial. The parametrized topological complexity of this trivial bundle is equal to the (standard) topological complexity of the fiber $X=\Conf(\R^d\smallsetminus\sO_1,n)$, see \cite[Ex.~4.2]{CFW}, and Theorem \ref{thm:main} is a restatement of results of \cite{FG} in this instance, since $X$ is homotopy equivalent to $\Conf(\R^d,n+1)$.
\end{remark}

We subsequently assume that $m\ge 2$. 
We first show that 
\begin{equation}  \label{eq:lower}
{\tc}\bigl[p\colon \Conf(\R^d,m+n) \to \Conf(\R^d,m)\bigr] \ge 2n+m-2.
\end{equation}
By Proposition \ref{prop:cup}, this is a consequence of the following. 

\begin{prop} \label{prop:CL2}
For $d\ge 2$ even, $E=\Conf(\R^d,m+n)$ and $B=\Conf(\R^d,m)$, the ideal 
\[
\ker[\Delta^* \colon H^*(E\times_BE) \to H^*(E)]
\] 
in $H^*(E\times_BE)$ has cup length $\cl(\ker \Delta^*)\ge 2n+m-2$.
\end{prop}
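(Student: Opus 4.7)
My plan is to invoke Proposition \ref{prop:cup} and exhibit $2n+m-2$ classes in $\ker\Delta^*$ with nonzero product. The natural starting observation is that $\Delta^*(\omega_{i,j}) = \Delta^*(\omega'_{i,j})$, so for every $j > m$ the difference $\bar\omega_{i,j} := \omega_{i,j} - \omega'_{i,j}$ lies in $\ker\Delta^*$ (and vanishes for $j \le m$). Since $d$ is even, each $\bar\omega_{i,j}$ has odd degree $d-1$: distinct $\bar\omega$'s anticommute, and every $\bar\omega_{i,j}^2$ vanishes. This is precisely the structural difference with the odd-dimensional case of \cite{CFW}, where $\bar\omega_{i,j}^2 \ne 0$ allowed one extra factor; in the even case we are forced to use $2n+m-2$ genuinely distinct factors.

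I propose to work with the explicit product
\[
\Omega \;=\; \prod_{i=1}^{m}\bar\omega_{i,m+1}\,\cdot\,\prod_{j=2}^{n}\bar\omega_{1,m+j}\,\bar\omega_{m+j-1,m+j},
\]
which contains exactly $m + 2(n-1) = 2n+m-2$ factors. The argument that $\Omega \ne 0$ proceeds in three stages. First, I would expand each $\bar\omega_{i,j} = \omega_{i,j} - \omega'_{i,j}$, producing a signed sum of $2^{2n+m-2}$ monomials in the generators $\omega,\omega'$ of Proposition \ref{prop:HEBE}. Second, each monomial will be normalized into the basis of Proposition \ref{prop:basis} by repeated application of Proposition \ref{prop:rewrite}. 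The first block $\prod_{i=1}^m \bar\omega_{i,m+1}$ is especially amenable: applied with $J = (1,\ldots,m)$ and $r = m+1$, Proposition \ref{prop:rewrite} rewrites the all-primed (respectively all-unprimed) subproduct as a signed sum indexed by $J$-admissible sequences $I$, of terms $\omega'_{i_1,2}\omega'_{i_2,3}\cdots\omega'_{i_{m-1},m}\,\omega'_{i_m,m+1}$ in which the first $m-1$ factors have $k\le m$ and therefore coincide with the shared classes of Proposition \ref{prop:HEBE}. The remaining two-factor blocks $\bar\omega_{1,m+j}\bar\omega_{m+j-1,m+j}$ similarly reduce via the three-term relations, again introducing shared classes. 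Third, I would single out a specific basis element with maximal shared content $J_1 = (2,3,\ldots,m)$ and with $(J_2,J_3)$ of sizes summing to $2n-1$, and track its coefficient in the expansion.

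The main obstacle is the combinatorial bookkeeping: tracking the $2^{2n+m-2}$ initial terms together with all their further rewritings via Proposition \ref{prop:rewrite} and the three-term relations, and reconciling all the signs that come from anticommutativity of odd-degree classes, is intricate. What should make this feasible is a rigidity phenomenon in the maximal-$|J_1|$ sector: because $\omega_{i,j}^2 = (\omega'_{i,j})^2 = 0$ and each $\omega_{i_k,k+1}$ produced by a $J$-admissible $I$ appears only once in its monomial, most contributions to the chosen basis element vanish, and the surviving contributions coming from distinct pairs $(S,I)$ (subset of primed-factor indices together with an admissible sequence) land in distinct basis monomials and therefore cannot cancel one another. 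Verifying that the resulting coefficient is $\pm 1$ would yield $\Omega \ne 0$, and Proposition \ref{prop:cup} then gives $\cl(\ker\Delta^*) \ge 2n+m-2$, as required.
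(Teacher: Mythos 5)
Your proposal follows essentially the same route as the paper: your product $\Omega$ is, up to reordering the odd-degree anticommuting factors, exactly the paper's product $\Psi$, and the paper likewise proves non-vanishing by expanding via Proposition \ref{prop:rewrite} and the three-term relations and isolating the basis monomial $\omega^{}_{1,2}\cdots\omega^{}_{1,m+n}\,\omega'_{m+1,m+2}\cdots\omega'_{m+n-1,m+n}$, which has $J_1=(2,\dots,m)$ and $|J_2|+|J_3|=2n-1$, precisely your target. The combinatorial verification you defer --- that after rewriting, no other monomial in the expansion can contribute to this particular basis element --- is exactly the content of the paper's concluding case analysis, and it goes through as you anticipate.
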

\begin{proof}
The ideal 
\begin{equation} \label{eq:kerideal}
{\mathcal J} = \langle \omega^{}_{i,j}-\omega'_{i,j} \mid 1\le i<j\ \text{and}\ m<j\le n+m\rangle
\end{equation} 
is generated by degree $d-1$ elements in $H^*(E\times_BE)$. 
One can check (cf. \cite[Prop.~9.4]{CFW}) that ${\mathcal J} \subseteq \ker\Delta^*$. So to prove the proposition it is enough to show that $\cl({\mathcal J}) \ge 2n+m-2$.
We establish this by showing that the product
\begin{equation*} \label{eq:not0even}
\Psi=
\prod_{i=1}^{m} (\omega^{}_{i,m+1}-\omega'_{i,m+1}) 
\prod_{j=m+2}^{m+n} (\omega^{}_{1,j}-\omega'_{1,j})
\prod_{j=m+2}^{m+n} (\omega^{}_{j-1,j}-\omega'_{j-1,j})
\end{equation*}
is nonzero in $H^*(E\times_BE)$. 

If $a_i,b_i$, $1\le i \le q$, are cohomology classes of the same degree, then
\begin{align*}
\prod_{i=1}^\ell (a_i-b_i) &= \sum_{S \subset [q]} (-1)^{|S|} c_1c_2\cdots c_q, \ 
\text{where}\ 
c_j=\begin{cases} a_j&\text{if $j \notin S$,} \\ b_j &\text{if $j \in S$.}\end{cases}\\
\intertext{Using this, we have}
\prod_{i=1}^{m} (\omega^{}_{i,m+1}-\omega'_{i,m+1}) &=
\sum_{S} (-1)^{|S|} \lambda_{1} \cdots \lambda_{m}, \hskip 46pt
\lambda_i=\begin{cases}
\omega^{}_{i,m+1}&\text{if $i\notin S$,}\\
\omega'_{i,m+1}&\text{if $i\in S$,}
\end{cases}\\
\prod_{j=m+2}^{m+n} (\omega^{}_{1,j}-\omega'_{1,j})&=
\sum_{T_1} (-1)^{|T_1|} \mu_{m+2} \cdots \mu_{m+n},\hskip 14pt
\mu_j=\begin{cases}
\omega^{}_{1,j}&\text{\hskip 10pt if $j\notin T_1$,}\\
\omega'_{1,j}&\text{\hskip 10pt if $j\in T_1$,}
\end{cases}\\
\prod_{j=m+2}^{m+n} (\omega^{}_{j-1,j}-\omega'_{j-1,j}) &= 
\sum_{T_2}(-1)^{|T_2|}\xi_{m+2}\cdots\xi_{m+n},\hskip 18pt
\xi_j=\begin{cases}
\omega^{}_{j-1j}&\text{if $j\notin T_2$,}\\
\omega'_{j-1j}&\text{if $j\in T_2$,}
\end{cases}\end{align*}
where 
$S\subset[m]$ and 
$T_1,T_2\subset[m+2,m+n]=\{m+2,m+3,\dots,m+n\}$ 
are increasing sequences.

For $T=(j_1,\dots,j_\ell)$ an increasing sequence in $[p,q]$,  denote the complementary sequence by  $T^{\cc}=(p,\dots,\widehat{j_1},\dots,\widehat{j_\ell},\dots,q)$, and let $\epsilon_T$ be the sign of the shuffle permutation taking $[p,q]$ to $(T^{\cc},T)$. Denote the constant sequence $(1,1,\dots,1)$ (of appropriate length) by $\mathbf{1}$, and let $T-{\mathbf{1}}=(j_1-1,\dots,j_\ell-1)$. 
Recalling that the cohomology classes $\omega^{}_{i,j}$ and $\omega'_{i,j}$ are of odd degree $d-1$, 
the latter two products above may be expressed as
\begin{equation} \label{eq:2of3prods}
\begin{aligned}
\prod_{j=m+2}^{m+n} (\omega^{}_{1,j}-\omega'_{1,j})&=
\sum_{T_1}(-1)^{|T_1|}\epsilon_{T_1} \omega^{}_{\mathbf{1},T_1^{\cc}} \omega'_{{\mathbf{1}},T_1},\\
\prod_{j=m+2}^{m+n} (\omega^{}_{j-1,j}-\omega'_{j-1,j}) &= 
\sum_{T_2}(-1)^{|T_2|}\epsilon_{T_2} \omega^{}_{T_2^{\mathsf c}-{\mathbf{1}},T_2^{\cc}} \omega'_{T_2-{\mathbf{1}},T_2}.
\end{aligned}
\end{equation}
Since, for $i=1,2$, $T_i$ and $T_i^\cc$ are increasing sequences in $[m+2,m+n]$ and $\b1<T_i$, $\b1<T_i^\cc$, and $T_i-\b1<T_i$, the monomials $\omega^{}_{\mathbf{1},T_1^{\cc}} \omega'_{{\mathbf{1}},T_1}$ and 
$ \omega^{}_{T_2^{\mathsf c}-{\mathbf{1}},T_2^{\cc}} \omega'_{T_2-{\mathbf{1}},T_2}$ arising in \eqref{eq:2of3prods} are elements of the basis for $H^*(E\times_BE)$ of Proposition \ref{prop:basis}.

Similarly, with $R=(m+1,m+1,\dots,m+1)$,  the first of the three products above may be expressed as
\begin{equation*} 
\begin{aligned}
\prod_{i=1}^{m} (\omega^{}_{i,m+1}-\omega'_{i,m+1}) &=
\sum_{S} (-1)^{|S|} \epsilon_S  \omega^{}_{S^\cc,R} \omega'_{S,R}\\
&=\sum_{\emptyset \subsetneq S \subsetneq [m]} (-1)^{|S|} \epsilon_S  \omega^{}_{S^\cc,R} \omega'_{S,R}
+\epsilon_\emptyset \omega^{}_{[m],R}+(-1)^m \epsilon_{[m]} \omega'_{[m],R}.
\end{aligned}
\end{equation*}
If $|S|\ge 2$ or $|S^\cc|\ge 2$, the monomial $\omega^{}_{S^\cc,R} \omega'_{S,R}$ is not an element of the basis of Proposition \ref{prop:basis}. Rewriting using Proposition \ref{prop:rewrite} and some sign simplification yields
\begin{equation} \label{eq:first}
\begin{aligned}
\prod_{i=1}^{m} (\omega^{}_{i,m+1}-\omega'_{i,m+1}) &=
\sum_{\emptyset \subsetneq S \subsetneq [m]}  
(-1)^{|S^\cc|}
\epsilon_S  
\Biggl[ \sum_{I_1} (-1)^{d_{I_1}} \omega^{}_{I_1,K_1}\Biggr]
\Biggl[ \sum_{I_2} (-1)^{d_{I_2}} \omega'_{I_2,K_2}\Biggr]\\
&\qquad + (-1)^m\epsilon_\emptyset \sum_{I_1} (-1)^{d_{I_1}} \omega^{}_{I_1,K_1}
+ \epsilon_{[m]}  \sum_{I_2} (-1)^{d_{I_2}} \omega'_{I_2,K_2}
\end{aligned}
\end{equation}
where $I_1$ and $I_2$ range over all $S^\cc$- and $S$-admissible sequences respectively, and if $S^\cc=(i_1,\dots,i_p)$ and $S=(j_1,,\dots,j_q)$, then $K_1=(i_2,\dots,i_{p-1},m+1)$ and $K_2=(j_2,\dots,j_{q-1},m+1)$. 
If, for instance, $S=(i)$ has cardinality one, then $I_2=(i)$ is the only $S$-admissible sequence, $K_2=(m+1)$, and the relevant sum in \eqref{eq:first} above consists of the single term $-\omega'_{i,m+1}$. 
Note also that $K_1=[2,m+1]$ if $S=\emptyset$ and $S^\cc=[m]$, while 
$K_2=[2,m+1]$ if $S=[m]$ and $S^\cc=\emptyset$.

The product $\Psi$ 
may then be obtained by multiplying the expressions of \eqref{eq:first} and \eqref{eq:2of3prods}. Expanding yields an expression of $\Psi$ as a linear combination of monomials $\omega^{}_{P_1,Q_1}\omega^{}_{P_2,Q_2}\omega'_{P_3,Q_3}$, where $Q_1\subset[m]$ and $Q_2,Q_3 \subset [m+1,m+n]$. Some of these monomials are elements of the basis of Proposition \ref{prop:basis}, others are not. One of the basis elements appearing in this expansion of $\Psi$ is
\begin{equation} \label{eq:theone}
\omega^{}_{1,2}\omega^{}_{1,3}\cdots\omega^{}_{1,m}\omega^{}_{1,m+1}\omega^{}_{1,m+2}\cdots\omega^{}_{1,m+n}
\omega'_{m+1,m+2}\omega'_{m+2,m+3}\cdots\omega'_{m+n-1,m+n}.
\end{equation}
This element 
is obtained by taking $S=\emptyset$, $S^\cc=[m]$ and $I_1=\b1$ in \eqref{eq:first}, so that the expansion of $\omega'_{S,R}$ is simply $1$, and by taking $T_1=T_2^\cc=\emptyset$, $T_1^\cc=T_2=[m+2,m+n]$ in \eqref{eq:2of3prods}, so that $\omega'_{\b1,T_1}=\omega^{}_{T_2^\cc-\b1,T_2^\cc}=1$. 
It 
may be expressed briefly as $x=\omega^{}_{\b1,K}\omega^{}_{\b1,T_1^\cc}\omega'_{T_2-\b1,T_2}$, where $K=[2,m+1]$.

We assert that the basis element $x=\omega^{}_{\b1,K}\omega^{}_{\b1,T_1^\cc}\omega'_{T_2-\b1,T_2}$ is unaffected by rewriting non-basis monomials arising in the expansion of $\Psi$ using the three term relations. This will insure the non-vanishing of $\Psi$ as needed.  
We will establish this assertion by (sketching) an elementary, albeit delicate, analysis of the affect of the three term relations on monomials  in the expansion of $\Psi$.

Let $y \neq x$ 
be a monomial in the expansion of $\Psi$. From the expansions \eqref{eq:first} and \eqref{eq:2of3prods}, 
up to sign, 
we have
\begin{equation} \label{eq:almost}
y=
\bigl(\omega^{}_{I_1,K_1}\omega^{}_{\b1,T_1^\cc}\omega^{}_{T_2^\cc-\b1,T_2^\cc}\bigr)
\bigl(\omega'_{I_2,K_2}\omega'_{\b1,T_1^{}}\omega'_{T_2^{}-\b1,T_2^{}}\bigr),
\end{equation}
where
$I_1$ and $I_2$ are (if non-empty) $S^\cc$- and $S$ admissible sequences for some $S\subset [m]$, and 
 for $j=1,2$, $K_j$ is either empty or is an increasing sequence in $[2,m+1]$ as described following \eqref{eq:first},  
and $T_j$ and $T_j^\cc$ are complementary increasing sequences in $[m+2,m+n]$. From Proposition \ref{prop:rewrite}, non-empty sequences $K_1$ and $K_2$ are of the form $(k_1,\dots,k_\ell,m+1)$ with  $k_\ell\le m$, so $\widehat{K}_1$ and $\widehat{K}_2$ are increasing sequences in $[2,m]$ of the form $(k_1,\dots,k_\ell)$, where $K_j=(\widehat{K}_j,m+1)$. Since $\omega'_{i,j}=\omega^{}_{i,j}$ for $1\le i<j\le m$, up to sign, the monomial 
$y$ 
can be rewritten as
\begin{equation} \label{eq:almost2}
y=\bigl(\omega^{}_{\widehat{I}_1,\widehat{K}_1}\omega^{}_{\widehat{I}_2,\widehat{K}_2} \bigr)
\bigl(\omega^{}_{\alpha,m+1}\omega^{}_{\b1,T_1^\cc}\omega^{}_{T_2^\cc-\b1,T_2^\cc}\bigr)
\bigl(\omega'_{\beta,m+1}\omega'_{\b1,T_1^{}}\omega'_{T_2^{}-\b1,T_2^{}}\bigr),
\end{equation}
where $I^{}_1=(\widehat{I}_1,\alpha)$ and $I^{}_2=(\widehat{I}_2,\beta)$.

Suppose the monomial $y$ of \eqref{eq:almost} is not an element of the basis of Proposition \ref{prop:basis}. 
First, consider the case where the subset $S$ of $[m]$ in \eqref{eq:first} is non-empty, so that $K_2\neq \emptyset$. As indicated in \eqref{eq:almost2} above, this gives rise to a factor of $\omega'_{\beta,m+1}$ in the monomial $y$. Subsequent simplifications, for instance if 
$\omega'_{\b1,T_1^{}}\omega'_{T_2^{}-\b1,T_2^{}}$ is not a basis element, either annihilate $y$ or give rise to basis elements involving $\omega'_{\beta,m+1}$ or $\omega'_{1,m+1}$. No factor of this form appears in the monomial $x$ of \eqref{eq:theone}.

It remains to consider the case where the subset  $S$ of $[m]$ in \eqref{eq:first} is empty. For $S=\emptyset$, we have $K_1=[2,m+1]$ and $\alpha \le m$ in \eqref{eq:almost2}. In this instance, 
\begin{equation} \label{eq:almost3}
y=\bigl(\omega^{}_{\widehat{I}_1,\widehat{K}_1}\bigr)
\bigl(\omega^{}_{\alpha,m+1}\omega^{}_{\b1,T_1^\cc}\omega^{}_{T_2^\cc-\b1,T_2^\cc}\bigr)
\bigl(\omega'_{\b1,T_1^{}}\omega'_{T_2^{}-\b1,T_2^{}}\bigr),
\end{equation}
where $I^{}_1=(\widehat{I}_1,\alpha)$ and $\widehat{K}_1=[2,m]$. We have either $T_1\neq\emptyset$ or $T^\cc_2\neq\emptyset$, since the basis element $x$ of \eqref{eq:theone} is obtained by taking $S=\emptyset$ and $T_1=T^\cc_2=\emptyset$.

If $T_1\neq \emptyset$, then $\omega'_{1,k}$ is a factor of $y$, where $k\in[m+2,m+n]$ denotes the largest element of $T_1$. 
This factor survives in each term of the expansion of $y$ arising from application of the three term relations to $\omega^{}_{\alpha,m+1}\omega^{}_{\b1,T_1^\cc}\omega^{}_{T_2^\cc-\b1,T_2^\cc}$ and resulting expressions. Consider the affect of expanding the factor $\omega'_{\b1,T_1^{}}\omega'_{T_2^{}-\b1,T_2^{}}$ using the three term relations in each such expression. Rewriting is required only when $T_1\cap T_2\neq \emptyset$, and involves relations $\omega'_{1,q}\omega'_{q-1,q}=\omega'_{1,q-1}(\omega'_{q-1,q}-\omega'_{1,q})$ for $q\in T_1\cap T_2$. If $k \notin T_2$, the factor $\omega'_{1,k}$ is unaffected. 
If, on the other hand, $k \in T_2$, then rewriting $\omega'_{1,k}\omega'_{k-1,k}$ yields expressions involving $\omega'_{1,k-1}$. Continuing as necessary yields a linear combination of basis elements, each of which contains a factor of $\omega'_{1,j}$, for some $j$, $m+1\le j \le k$. No factor of this form appears in the monomial $x$ of \eqref{eq:theone}.

Finally, if $T_1=\emptyset$, then $T_2^\cc\neq\emptyset$. This implies that $T_2$ is a proper subset of $[m+2,m+n]$, and consequently that the factor
$\omega'_{m+1,m+2}\omega'_{m+2,m+3}\cdots\omega'_{m+n-1,m+n}$ appearing in  the monomial $x$ of \eqref{eq:theone} cannot appear in $y$. Since $\omega'_{T_2-\b1,T_2}$ is a basis element, any necessary expansion of $y$ involves applications of the three term relations to the factor 
$\omega^{}_{\b1,T_1^\cc}\omega^{}_{T_2^\cc-\b1,T_2^\cc}$. Since these, and subsequent simplifications, cannot introduce any factors of the form $\omega'_{p,q}$, the factor
$\omega'_{m+1,m+2}\omega'_{m+2,m+3}\cdots\omega'_{m+n-1,m+n}$ of $x$ cannot appear in any resulting monomial.

Thus, as asserted, expressing $\Psi$ in terms of the basis of Proposition \ref{prop:basis} does not alter the summand  $x=\omega^{}_{\b1,K}\omega^{}_{\b1,T_1^\cc}\omega'_{T_2-\b1,T_2}$. Therefore, $\Psi \neq 0$ and $\cl(\ker \Delta^*)\ge \cl({\mathcal J})\ge 2n+m-2$ as required.
\end{proof}

As noted above, the inequality \eqref{eq:lower} follows from 
Proposition \ref{prop:CL2}.  
We establish the reverse inequality for the case $d=2$ of the plane and for the case $d\ge 4$ of higher even dimensions using different methods. 
Since the result in the planar case will play a role in the proof in the higher dimensional case, we begin with the former.

\subsection*{The plane}
Consider the case $d= 2$ of the plane $\R^2=\C$. Express the configuration space $\Conf(\R^2,\ell)$ as
\[
\Conf(\R^2,\ell) = \Conf(\C,\ell) = \{(y_1,\dots,y_\ell) \in \C^\ell \mid y_i \neq y_j\ \text{if}\ i \neq j\}
\]
in complex coordinates. 

For any $\ell\ge 3$, the map $h_\ell\colon \Conf(\C,\ell) \to \Conf(\C\smallsetminus\{0,1\},\ell-2) \times \Conf(\C,2)$ defined by 
\begin{equation} \label{eq:homeo}
h_{\ell}(y_1,y_2,y_3,\dots,y_\ell) =\left(\Bigl(\frac{y_3-y_1}{y_2-y_1},\dots,\frac{y_\ell-y_1}{y_2-y_1}\Bigr),\bigl(y_1,y_2\bigr)\right) 
\end{equation}
is a homeomorphism. It follows that the bundle $p\colon \Conf(\C,m+n) \to \Conf(\C,m)$ is trivial for $m=2$. The parametrized topological complexity is then equal to the 
topological complexity of the fiber $\Conf(\C\smallsetminus\{0,1\},n)$, see \cite[Ex.~4.2]{CFW}. 
Since $\tc(\Conf(\C\smallsetminus\{0,1\},n))=2n$ as shown in \cite{FGY}, for $m=2$, we have 
\[
\tc[p\colon \Conf(\C,n+2) \to \Conf(\C,2)]=\tc(\Conf(\C\smallsetminus\{0,1\},n))=2n
\]
as asserted.

For $m\ge 3$, the maps \eqref{eq:homeo} give rise to an equivalence of fibrations
\[
\begin{CD}
\Conf(\C,m+n) @>{h_{m+n}}>> \Conf(\C\smallsetminus\{0,1\},m+n-2) \times \Conf(\C,2) \\
@V{p}VV @VV{q}V \\
\Conf(\C,m) @>{h_m}>> \Conf(\C\smallsetminus\{0,1\},m-2) \times \Conf(\C,2),
\end{CD}
\]
where $q=q'\times q''$, with $q'$ the 
projection onto the first $m-2$ coordinates 
and $q''=\id$ the identity map. Since $\tc[q''\colon \Conf(\C,2)\to \Conf(\C,2)]=0$, the 
product inequality Proposition \ref{prop:product} implies that ${\tc}\left[p\colon \Conf(\C,m+n) \to \Conf(\C,m)\right]$ is less than or equal to 
\begin{equation} \label{eq:qprime}
\tc\left[q'\colon \Conf(\C\smallsetminus\{0,1\},m+n-2)
\to  \Conf(\C\smallsetminus\{0,1\},m-2)\right].
\end{equation}
Let $E'=\Conf(\C\smallsetminus\{0,1\},m+n-2)$ and $B'=\Conf(\C\smallsetminus\{0,1\},m-2)$. The 
fiber of $q'\colon E' \to B'$ is the configuration space $X=\Conf(\C\smallsetminus\sO_m,n)$, which has the homotopy type of a CW-complex of dimension $n$. Similarly, $B'$ has the homotopy type of a CW-complex of dimension $m-2$. Using Proposition \ref{prop:upper}, 
we obtain the following upper bound for \eqref{eq:qprime}: 
\[
\tc[q'\colon E' \to B'] \le 2\dim(X)+\dim(B)=2n+m-2.
\]
Combining the above observations yields
\[
{\tc}\left[p\colon \Conf(\C,m+n) \to \Conf(\C,m)\right] \le 2n+m-2.
\]
Together with the lower bound \eqref{eq:lower}, this completes the proof of Theorem \ref{thm:main} in the case $d=2$ of the plane $\R^2=\C$.

Theorem \ref{thm:main} for the planar case $d=2$ 
informs on the structure of the cohomology ring $H^*(E\times_BE)$ for any even $d$. 
This structure will be utilized in the case $d\ge 4$ of higher even dimensions below.
Recall the ideal ${\mathcal J}$ in $H^*(E\times_BE)$ from \eqref{eq:kerideal}.
\begin{cor} \label{cor:clJ}
For positive integers $m$ and $d$ with $m\ge 2$ and $d\ge 2$ even, let 
$E=\Conf(\R^d,m+n)$, and $B=\Conf(\R^d,m)$. Then 
the ideal 
\[
{\mathcal J}=  \langle \omega^{}_{i,j}-\omega'_{i,j} \mid 1\le i<j\ \text{and}\ m<j\le n+m\rangle
\] 
in $H^*(E\times_BE)$ has cup length $\cl({\mathcal J})= 2n+m-2$.
\end{cor}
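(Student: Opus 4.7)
The plan is to sandwich $\cl(\mathcal{J})$ between matching bounds of $2n+m-2$, reducing the upper bound to the planar case $d=2$ just established and then propagating it to arbitrary even $d$ by a purely algebraic comparison. The lower bound $\cl(\mathcal{J}) \ge 2n+m-2$ is already in hand: the proof of Proposition \ref{prop:CL2} exhibits, for every even $d$, an explicit product $\Psi$ of $2n+m-2$ generators of $\mathcal{J}$ of the form $\omega^{}_{i,j}-\omega'_{i,j}$ and shows $\Psi \neq 0$, which is exactly the definition of cup length $\ge 2n+m-2$.

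For the matching upper bound I would first dispatch the case $d=2$. Since $\mathcal{J} \subseteq \ker \Delta^*$ (as verified in the proof of Proposition \ref{prop:CL2}), Proposition \ref{prop:cup} gives
\[
\cl(\mathcal{J}) \le \cl(\ker \Delta^*) \le \tc[p\colon \Conf(\C,m+n)\to \Conf(\C,m)] = 2n+m-2,
\]
the final equality being the planar case of Theorem \ref{thm:main} just proved.

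To lift the upper bound to arbitrary even $d\ge 4$ I would argue via an abstract ring isomorphism. Propositions \ref{prop:HEBE} and \ref{prop:basis} together assert that, for every even $d$, the ring $H^*(E\times_BE)$ admits the same generators $\omega^{}_{i,j}, \omega'_{i,j}$, the same relations, and the same additive basis indexed by the combinatorial data of Proposition \ref{prop:basis}; only the degrees of the generators differ, being $d-1$ rather than $1$. The tautological correspondence $\omega^{}_{i,j}\leftrightarrow \omega^{}_{i,j}$, $\omega'_{i,j}\leftrightarrow \omega'_{i,j}$ therefore extends to an isomorphism of abstract rings between the $d=2$ version of $H^*(E\times_BE)$ and the general even $d$ version, under which the distinguished ideal $\mathcal{J}$ is mapped to $\mathcal{J}$. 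Since cup length is invariant under ring isomorphism, $\cl(\mathcal{J})$ takes the same value for every even $d\ge 2$, namely $2n+m-2$.

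The substantive work---the non-vanishing of the long product $\Psi$---has already been carried out in Proposition \ref{prop:CL2}, so no serious obstacle remains. The one residual delicacy is confirming that the relations of Proposition \ref{prop:HEBE} constitute a \emph{complete} presentation of $H^*(E\times_BE)$, so that the abstract ring isomorphism above is well-defined; this is precisely what Proposition \ref{prop:basis} ensures through its uniform-in-$d$ description of an additive basis.
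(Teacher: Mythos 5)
Your proposal is correct and follows essentially the same route the paper intends: the lower bound comes from the non-vanishing of $\Psi$ in the proof of Proposition \ref{prop:CL2}, the upper bound for $d=2$ from $\cl(\mathcal J)\le\cl(\ker\Delta^*)\le\tc[p]$ together with the planar case of Theorem \ref{thm:main}, and the transfer to even $d\ge 4$ from the fact that Propositions \ref{prop:HEBE} and \ref{prop:basis} give a presentation and basis of $H^*(E\times_BE)$ depending on $d$ only through the (odd, in all even-$d$ cases) degree $d-1$ of the generators, so the regrading isomorphism preserves $\mathcal J$ and its cup length.
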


\subsection*{Higher even dimensions} \label{sec:high dim}
For $d\ge 4$ even, we use obstruction theory to complete the proof of Theorem \ref{thm:main}.

The Schwarz genus of a fibration $p\colon E \to B$ with fiber $X$ 
is at most $r-1$ if and only if its $r$-fold fiberwise join admits a continuous section, cf. \cite[Thm.~3]{Sch}. Consequently, 
$\tc[p\colon E \to B]\le r-1$ if and only if the $r$-fold fiberwise join
\[
\Pi_r\colon \Jo_r (E^I_B) \to E\times_B E
\] 
admits a section. 
Thus, to show that 
${\tc}\left[p\colon \Conf(\R^d,m+n)\to \Conf(\R^d,m)\right] \le 2n+m-2$, it suffices to prove the following.

\begin{prop}  \label{prop:join}
For positive integers $m$ and $d$ with $m\ge 2$ and $d\ge 4$ even, let 
$E=\Conf(\R^d,m+n)$, $B=\Conf(\R^d,m)$,  and $r=2n+m-1$. 
Then the fibration 
\[
\Pi_r \colon \Jo_r E^I_B \to E\times_B E
\] 
admits a section.
\end{prop}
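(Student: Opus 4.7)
The plan is to prove the existence of a section via obstruction theory, exploiting that the relevant dimensions exactly match. By \eqref{eq:hdims}, the base $E\times_BE$ has the homotopy type of a CW complex of dimension $N := r(d-1) = (2n+m-1)(d-1)$, while the fiber $\Jo_r \Omega X$ of $\Pi_r$ is $(N-2)$-connected, since $\Omega X$ is $(d-3)$-connected (as $X$ is $(d-2)$-connected) and the $r$-fold join of a $(d-3)$-connected space is $(r(d-1)-2)$-connected. All obstruction groups $H^{k+1}(E\times_BE; \pi_k(\Jo_r \Omega X))$ therefore vanish except possibly when $k=N-1$, so it suffices to kill the single primary obstruction $\theta_r \in H^N(E\times_BE;\pi_{N-1}(\Jo_r\Omega X))$.

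By the Hurewicz theorem the coefficient module is $\pi_{N-1}(\Jo_r\Omega X) \cong H_{N-1}(\Jo_r\Omega X) \cong \pi_{d-1}(X)^{\otimes r}$, a free abelian group. The key step, which I expect to be the main technical hurdle, is to invoke the standard theory of fiberwise joins (cf.~\cite{Sch}) to identify $\theta_r$, up to sign, with the $r$-fold external cup product $\theta^{\smile r}$ of the primary obstruction $\theta \in H^{d-1}(E\times_BE;\pi_{d-1}(X))$ of the original fibration $\Pi \colon E^I_B \to E \times_B E$. This identification requires careful tracking of local coefficients through the Postnikov tower of the join, and is the one place where one must be slightly delicate.

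Next, the fibration $\Pi$ admits a canonical section over the diagonal $\Delta(E) \subset E\times_BE$, namely the constant-path section $e \mapsto (\text{constant path at } e)$, so $\Delta^*\theta = 0$. Choosing an integral basis of the free abelian group $\pi_{d-1}(X)$ and decomposing $\theta$ accordingly, each integer component lies in $\ker\Delta^* \subset H^{d-1}(E\times_BE;\Z)$. By Proposition \ref{prop:HEBE}, $\ker\Delta^*$ in degree $d-1$ is spanned precisely by the generators $\omega^{}_{i,j}-\omega'_{i,j}$ with $j>m$ of the ideal $\mathcal J$ from \eqref{eq:kerideal}, so each component of $\theta$ lies in $\mathcal J$. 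Finally, by Corollary \ref{cor:clJ}, $\cl(\mathcal J) = 2n+m-2 = r-1$, so every product of $r$ elements of $\mathcal J$ vanishes; expanding $\theta^{\smile r}$ in this basis therefore forces $\theta_r = 0$, and the section of $\Pi_r$ exists.
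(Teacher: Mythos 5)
Your proposal is correct and follows essentially the same route as the paper's proof: reduce to the single primary obstruction via the connectivity of $\Jo_r\Omega X$ and the homotopy dimension of $E\times_BE$, identify it with the $r$-fold product $\theta^r$ by Schwarz's theorem, observe $\theta\in\ker\Delta^*$ so its components lie in the ideal $\mathcal J$, and kill it using $\cl(\mathcal J)=r-1$ from Corollary \ref{cor:clJ}. The only cosmetic difference is that you write the coefficients as $\pi_{d-1}(X)^{\otimes r}$ rather than $[H_{d-1}(X)]^{\otimes r}$, which agree by Hurewicz since $d\ge 4$, and the local-coefficient issue you flag is dispatched in the paper simply by noting that $E\times_BE$ is simply connected.
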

\begin{proof}
For any fibration $p\colon E \to B$, the fiber of $\Pi_r$ is $\Jo_r (\Omega{X})$, the $r$-fold join of the loop space of $X$. In the case of the Fadell-Neuwirth bundle of configuration spaces, we have $B=\Conf(\R^d,m)$, $E=\Conf(\R^d,m+n)$, and $X=\Conf(\R^d\smallsetminus\sO_m,n)$. As noted previously, $X$ is $(d-2)$-connected. Since the join of $p$- and $q$-connected CW-complexes is $(p+q+2)$-connected, the fiber $\Jo_r (\Omega X)$ of $\Pi_r$ is $(rd-r-2)$-connected.

From the connectivity of $X$, 
the primary obstruction to the existence of a section of $\Pi_r \colon \Jo_r E^I_B \to E\times_B E$ is an element 
$\Theta_r \in H^{r(d-1)}(E\times_B E; \pi_{r(d-1)-1}(\Jo_r \Omega X))$. 
Since $\hdim \bigl(E\times_B E\bigr)=r(d-1)$ as noted in  \eqref{eq:hdims}, higher obstructions vanish for dimensional reasons. So
$\Theta_r$  is the only obstruction.  
By the Hurewicz theorem, we have $\pi_{r(d-1)-1}(\Jo_r \Omega X)=H_{r(d-1)-1}(\Jo_r\Omega X)$. 
For spaces $Y$ and $Z$ with 
torsion free integral homology, the (reduced) homology of the join is given by $\widetilde{H}_{q+1}(Y\Jo Z)=\bigoplus_{i+j=q} \widetilde{H}_i(Y) \otimes \widetilde{H}_j(Z)$. This, together with the fact that the homology groups of 
$X$ (and $\Omega X$) are free abelian, yields
\[
\pi_{r(d-1)-1}(\Jo_r \Omega X)=H_{r(d-1)-1}(\Jo_r\Omega X) = [\widetilde{H}_{d-1}(X)]^{\otimes r}
= [H_{d-1}(X)]^{\otimes r},
\]
the last equality since $d\ge 4$. Thus, $\Theta_r \in H^{r(d-1)}(E\times_B E;  [H_{d-1}(X)]^{\otimes r})$.

By \cite[Thm.~1]{Sch}, the obstruction $\Theta_r$ decomposes as $\Theta_r=\theta\smile \dots \smile \theta = \theta^r$, where $\theta \in H^{d-1}(E\times_B E;  H_{d-1}(X))$ is the primary obstruction to the existence of a section of $\Pi\colon E^I_B \to E\times_B E$.  
Since $E\times_B E$ is simply connected,  the system of coefficients $H_{d-1}(X)$ on $E\times_B E$ is trivial. As noted above,  $H_{d-1}(X)$ is torsion free. By Proposition \ref{prop:HEBE}, the cohomology ring $H^*(E\times_B E)$ is also torsion free. It follows that $H^*(E\times_B E;[H_{d-1}(X)]^{\otimes q})$ is  torsion free for any $q\ge 1$.

Since $\theta$ is the primary obstruction to the existence of a section the fibration $\Pi\colon E^I_B \to E\times_B E$, 
we have
\[
\theta \in \ker[\Delta^*\colon H^*(E\times_B E;H_{d-1}(X)) \longrightarrow H^*(E;\Delta^*H_{d-1}(X))].
\]
For brevity, denote the free abelian group $H_{d-1}(X)$ by $A$. Using a Universal Coefficient theorem (for a (co)chain complex computing $H^*(E\times_B E)$), we can identify $H^{d-1}(E\times_B E;A)$ with $H^{d-1}(E\times_B E)\otimes A$, and $H^{d-1}(E;A)$ with $H^{d-1}(E)\otimes A$. 
With these identifications,  we have $\Delta^*\colon H^{d-1}(E\times_B E)\otimes A \to H^{d-1}(E)\otimes A$, and $\theta\in\ker(\Delta^*)$ may be expressed as a linear combination of elements of the form $\eta_j \otimes a_j$, where the elements $\eta_j$ are the degree $d-1$ generators of 
$\ker[\Delta^*\colon H^*(E\times_B E)) \to H^*(E;\Z))]$ and $a_j \in A$.

The $r$-fold cup product $\Theta_r=\theta^r \in H^{r(d-1)}(E\times_B E)\otimes A^{\otimes r}$ is then realized as a linear combination of elements of the form $\eta_J \otimes a_J$, where $\eta_J=\eta_{j_1}\smile \dots \smile \eta_{j_r}$ is an $r$-fold cup product of degree $d-1$ generators of $\ker[\Delta^*\colon H^*(E\times_B E)) \to H^*(E))]$, and $a_J \in A^{\otimes r}$. But the degree $d-1$ generators of $\ker\Delta^*$ are the generators of the ideal $\mathcal J$ of \eqref{eq:kerideal}. 
As noted in Corollary \ref{cor:clJ}, we have $\cl({\mathcal J}) = 2n+m-2$. It follows that for $r=2n+m-1$, we have ${\mathcal J}^r=0$, and consequently $\theta^r=0$. Since the primary obstruction $\Theta_r=\theta^r$ vanishes, 
the fibration $\Pi_r \colon \Jo_r E^I_B \to E\times_B E$ admits a section.
\end{proof}
This
completes the proof of Theorem \ref{thm:main} in the case where $d\ge 4$ is even.

\begin{ack}
The first author thanks Emanuele Delucci, Nick Proudfoot, 
and He Xiaoyi  for productive conversations, and the organizers of the virtual workshop \emph{Arrangements at Home} for facilitating several of these conversations. Portions of this work were undertaken when the first and second authors visited the University of Florida Department of Mathematics in November, 2019. We thank the department for its hospitality and for providing a productive mathematical environment. We also thank the anonymous referees for their helpful comments.
\end{ack}

\section*{Appendix} \label{sec:appendix}

In this appendix, we state and prove a general result which includes as a special case the fact noted in Proposition \ref{prop:Pi} that, for a fibration $p\colon E \to B$,  the map $\Pi \colon E^I_B \to E\times_B E$ is also a fibration.

Let $p: E\to B$ be a Hurewicz fibration. For a topological space $X$, let $E^X_B$ denote the space of all continuous maps $f\colon X\to E$ lying in a single fiber of $p$, that is,  such that the composition $p\circ f \colon X\to B$ is a constant map. Equip the space $E^X_B$ with the compact-open topology. The following result may be compared with \cite[Thm. 2.8.2]{Spa}

\begin{propo*}
Let $(K, L)$ be a pair consisting of a finite CW-complex and a subcomplex. Then the restriction map 
\[
\Pi\colon E^K_B \to E^L_B, \quad \Pi(f) = f|L \quad \mbox{for}\quad f\in E^K_B, 
\]\
is a Hurewicz fibration.
\end{propo*}
\begin{proof}
 Let $\lambda: \overline B \to E^I$ be a lifting function for the fibration $p\colon E \to B$ (see \cite[Sec. 2.7]{Spa}). 
Here 
$\overline B=\{(e, \omega)\in E\times B^I \mid p(e)=\omega(0)\}$ and $\lambda(e, \omega)\in E^I$ satisfies 
$p\circ \lambda(e, \omega)=\omega$ and $\lambda(e, \omega)(0)=e$. Our goal is to construct a lifting function 
\begin{equation}\label{eq:Lambda} \tag{$\dag$}
\Lambda: \overline{E^L_B}\, \to\, \left(E^K_B\right)^I
\end{equation}
for $\Pi$. 
Here $\overline{E^L_B}$ is the set of pairs 
\[
\{(F, G)\in E^K_B \times \left(E^L_B\right)^I \mid F(x)=G(x, 0) \, \, \mbox{for}\, x\in L\}.
\]
Clearly, $G\in (E^L_B)^I$ can be viewed as a map $G\colon L\times I\to E$ such that 
for any $t\in I$ the image $G(L\times t)\subset E$ lies in a single fiber. 
The map $F\in E^K_B$ satisfies $F(x) = G(x, 0)$ for $x\in L$. 
Denote by $\omega(t) = p(G(x, t))$ (where $x\in L$) the  path in $B$ obtained by applying the projection $p$. 
For $x\in K$, the formula $\tilde F(x, t) = \lambda(F(x), \omega)(t)$ defines a map $\tilde F\in (E^K_B)^I$ satisfying $\tilde F(x, 0) =F(x)$ and $p(\tilde F(x, t))=\omega(t)= p(G(x, t))$. However, we may not guarantee that the condition $\tilde F(x, t)= G(x, t)$ for $x\in L$ and $t\in I$ holds. 

Let $\omega^{[\tau, 1]}$ denote the path $s\mapsto \omega^{[\tau, 1]}(s) = \omega(\tau +(1-\tau) s)$, where $s\in [0,1]$. Define 
$\alpha_G\colon L\times I\times I\to E$
by
\[
\alpha_G(x, \tau, t) = \begin{cases}
G(x, t),& \text{for $0\le t\le \tau$,}\\ 
\lambda(G(x, \tau), \omega^{[\tau, 1]})\left(\frac{t-\tau}{1-\tau}\right), & \text{for $ \tau\le t\le 1$.}
\end{cases}
\]
For $x\in L$, we have $p(\alpha_G(x, \tau, t))=\omega(t)$, $\alpha_G(x, 0, t) = \lambda(G(x, 0), \omega)(t)=\tilde F(x, t)$, and
$\alpha_G(x, 1, t)=G(x, t)$. 
Thus $\alpha_G$ is a fibrewise homotopy between $\Pi(\tilde F)$ and 
$G$ in $(E^L_B)^I$. 

Let $\rho: K\times I\to K\times 0\cup L\times I$ be a retraction. 
Define $H \colon K\times I\times I \to E$ to be the composition 
Denote by $H$ the composition
\[
K\times I\times I \xrightarrow{\rho\times1}  K\times 0\times I\cup L\times I\times I \xrightarrow{\tilde F\ \cup \ \alpha_G}\ E.
\]
The map $h(x, t) = H(x, 1, t)$ is an element of $(E^K_B)^I$ satisfying
$h(x,0)= F(x)$ and $h(x, t) =G(x, t)$ for $ x\in L$.
Hence we may define the lifting function \eqref{eq:Lambda} by setting
$\Lambda(F, G)=h$.
\end{proof}

Proposition \ref{prop:Pi}, asserting that $\Pi\colon E^I_B \to E\times_BE$, $\gamma \mapsto (\gamma(0),\gamma(1))$, is a fibration, may be obtained by taking  $K=[0,1]$ and $L=\{0,1\}$ in the above result.

\newcommand{\arxiv}[1]{{\texttt{\href{http://arxiv.org/abs/#1}{{arXiv:#1}}}}}

\newcommand{\MRh}[1]{\href{http://www.ams.org/mathscinet-getitem?mr=#1}{MR#1}}

\bibliographystyle{amsplain}

\end{document}